\documentclass[12pt, twoside]{article}
\usepackage{amsmath,amsthm,amssymb}
\usepackage{times}
\usepackage{enumerate}

\pagestyle{myheadings}
\def\titlerunning#1{\gdef\titrun{#1}}
\makeatletter
\def\author#1{\gdef\autrun{\def\and{\unskip, }#1}\gdef\@author{#1}}
\def\address#1{{\def\and{\\\hspace*{18pt}}\renewcommand{\thefootnote}{}%
\footnote {#1}}%
\markboth{\autrun}{\titrun}}
\makeatother
\def\email#1{e-mail: #1}
\def\subjclass#1{{\renewcommand{\thefootnote}{}%
\footnote{\emph{Mathematics Subject Classification (2010):} #1}}}
\def\keywords#1{\par\medskip
\noindent\textbf{Keywords.} #1}


\def\eps{\epsilon}%
%
%
%
%
\def\tensor{\,\raise2pt\hbox{${}_{\otimes}$}\,}
\def\fdg{\,:\,}
\def\ptl{\partial}
\def\rest#1{\raise-2pt\hbox{${\lfloor_{#1}}$}}

\def\mbo#1{\boldsymbol{#1}}

\def\ip#1#2{\langle#1,#2\rangle}

\def\hatt#1{\widehat #1{}}
\def\olin#1{\overline{#1}{}}

%
\def\grad{{\nabla}}
\def \bgrad{\bar{\nabla}}
\newcommand{\leftexp}[2]{{\vphantom{#2}}^{#1}{#2}}

%


\def\halb{\frac{1}{2}}

\def \a{\alpha}
\def \b {\beta}

\def \bric{\olin{\text{Ric}}}

\def \brie {\olin{\text{Riem}}}
\def \rie {\text{Riem}}


\newtheorem{theorem}{Theorem}[section]
\newtheorem{lemma}[theorem]{Lemma}


\newcommand{\ba}{\begin{array}}
\newcommand{\ea}{\end{array}}
\newcommand{\bea}{\begin{eqnarray}}
\newcommand{\eea}{\end{eqnarray}}
\newcommand{\bee}{\begin{eqnarray*}}
\newcommand{\eee}{\end{eqnarray*}}

\renewcommand{\a}{\alpha}
\renewcommand{\b}{\beta}

\usepackage{color}

\newcommand{\green}[1]{{\color{green}#1}}



\setlength{\marginparwidth}{25mm}



\newcounter{mnotecount}[section]

\renewcommand{\themnotecount}{\thesection.\arabic{mnotecount}}


\newcounter{mymnotecount}[section]
\renewcommand{\themymnotecount}{\thesection.\arabic{mymnotecount}}
\newcommand{\mymnote}[1]{\protect{\stepcounter{mymnotecount}}${\raisebox{0.5\baselineskip}[0pt]{\makebox[0pt][c]{\color{green}{\tiny\em$\bullet$\themnotecount}}}}$\marginpar{\raggedright\tiny\em$\!\bullet$\themymnotecount:

\green{#1}}\ignorespaces}

\renewcommand{\mymnote}[1]{}
\allowdisplaybreaks
\begin{document}
	
	\baselineskip=17pt
	

	\titlerunning{Curvature Propagation}
	\title{Curvature Propagation  for the 3+1 Dimensional U(1) Symmetric Einstein Spacetimes}
	
	\author{Nishanth Gudapati}
	
	\date{}
	
	\maketitle

	\address{N. Gudapati: Department of Mathematics, Clark University, 950 Main Street
		Worcester, MA 01610; \email{ngudapati@clarku.edu}}

	\subjclass{Primary 83C05, Secondary 35C15}
	
	
	\begin{abstract}
		As it is well known, the global structure of the Einstein equations for general relativity in the context of the initial value problem, is a difficult and intricate mathematical problem. Therefore, any additional structure in their formulation is useful as a tool for studying the global behaviour of the initial value problem of the Einstein equations. In our previous works, we have used the additional structure provided by the dimensional reduction to $2+1$ dimensional Einstein-wave map system. In this work, we shall focus on yet another structure in the Einstein equations for the U(1) symmetric spacetimes, namely the analogy with the Yang-Mills theory and reconcile with the dimensionally reduced field equations.

		\keywords{Yang-Mills fields, Cartan formalism, Einstein-wave map system, Friedlander-Hadamard theory}
	\end{abstract}
\section{Background and Introduction}
The subject of this article is the existence problem of the Einstein equations for general relativity: 

\begin{align} \label{ein-orig}
    \olin{R}_{\mu \nu} =0, \quad (\bar{M}, \bar{g})
\end{align}

defined on a 4-dimensional Lorentzian spacetime $(\bar{M}, \bar{g}).$ The existence problem of the Einstein equations is formulated in terms of the initial value problem or the Cauchy problem of \eqref{ein-orig}. For the Cauchy problem of the Einstein equations, the constraints play an important role: 
\begin{subequations}
\begin{align}
    \bar{H} =& R_{\bar{q}} +  \bar{q}^{ij} \bar{k}_{ij} - \Vert \bar{k} \Vert^2, \\
 \bar{H}_i=&   \leftexp{(\bar{q})}{\grad}^j (\bar{k}_{ij} - \bar{q}_{ij} \textnormal{tr} \bar{k}), \quad (\olin{\Sigma}, \bar{q}) 
\end{align}
\end{subequations}
defined on the Riemannian manifold $(\olin{\Sigma}, \bar{q})$ after a $3+1$ ADM decomposition of the Lorentzian spacetime 
$(\bar{M}, \bar{g}) =\fdg (\olin{\Sigma}, \bar{q}) \times \mathbb{R}.$ It is well known that the Einstein equations are diffeomorphism invariant, as a consequence of the Bianchi identities defined on $(\bar{M}, \bar{g})$. In other words, we have the choice of gauge at our discretion to study the initial value problem of the Einstein equations \eqref{ein-orig}. From the classic result of Yvonne-Choquet Bruhat, if we choose the harmonic gauge $\{ x^\a, \a=0, 1, 2, 3 \}$ such that 

\begin{align}
\square_{\bar{g}} x^\a =0, (\bar{M}, \bar{g}) \quad \a = 0, 1, 2, 3
\intertext{where the covariant d'Alembertian is}
\square_{\bar{g}} = \frac{1}{\sqrt{-\bar{g}}} \ptl_\mu (\bar{g}^{\mu \nu} \ptl_\nu)
\end{align}
the Einstein equations can be reduced to a strictly hyperbolic partial differential equations for the metric $\bar{g}:$

\begin{align}
    \square_{\bar{g}} \bar{g} = N(\bar{g}, \ptl \bar{g}).
\end{align}
The advantage of the harmonic gauge is that, suppose the constraint equations on the initial data $(\olin{\Sigma}_0, \bar{q}_0)$ 

\begin{subequations}
\begin{align}
    \bar{H} = R_{\bar{q}} +  \bar{q}^{ij} \bar{k}_{ij} - \Vert \bar{k} \Vert^2=0, \\
    \leftexp{(\bar{q})}{\grad}^j (\bar{k}_{ij} - \bar{q}_{ij} \textnormal{tr} \bar{k}) =0, \quad (\olin{\Sigma}_0, \bar{q}_0) 
\end{align}
\end{subequations}

are satisfied, the constraints are automatically propagated by virtue of the propagation of the harmonic gauge condition. Therefore, the entire Einstein equations in harmonic gauge are strictly hyperbolic partial differential equations in the harmonic gauge. This result was used by Choquet-Bruhat and Geroch \cite{ycb_classic} to establish a classic, seminal existence result for the Einstein equations for general relativity. This result of major significance allows us to conclude that the Einstein equations for general relativity are amenable to the initial value problem and that questions such as local and global well-posedness are applicable.

\begin{theorem}
Suppose the constraint equations of the Einstein equations are satisfied on the initial data $(\olin{\Sigma}_0, \bar{q}_0)$, then there exists  development of $(\olin{\Sigma}_0, \bar{q}_0)$ which is an extension of every other development of $(\olin{\Sigma}_0, \bar{q}_0)$. This development is unique up to an isometry. 
\end{theorem}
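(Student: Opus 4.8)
The plan is to prove this in two stages: a \emph{local} existence-and-uniqueness result obtained through the harmonic gauge reduction already described, followed by a maximality/gluing argument of Zorn type that upgrades local developments to a single development extending all others. The first stage exploits the reduced system $\square_{\bar{g}} \bar{g} = N(\bar{g}, \ptl \bar{g})$; given the constraint-satisfying data $(\olin{\Sigma}_0, \bar{q}_0)$ together with a prescribed second fundamental form $\bar{k}$, I would prescribe Cauchy data for this system on $\olin{\Sigma}_0$ by fixing $\bar{g}$ and its transversal derivative so as to induce $(\bar{q}_0, \bar{k})$ and so that the harmonic gauge functions $\square_{\bar{g}} x^\a$ vanish on $\olin{\Sigma}_0$. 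Since the reduced system is quasilinear and strictly hyperbolic, the Leray theory of hyperbolic systems (energy estimates and the associated existence theorem) yields a unique local solution $\bar{g}$ on a neighborhood of $\olin{\Sigma}_0$.

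Second, I would verify that this solution of the reduced equations is genuinely a solution of the full Einstein equations \eqref{ein-orig}, and that the gauge condition propagates. The key point is that the gauge defect $\square_{\bar{g}} x^\a$ satisfies a homogeneous linear wave equation --- a consequence of the contracted Bianchi identities $\leftexp{(\bar{g})}{\grad}^\mu \olin{R}_{\mu \nu} = 0$ --- with vanishing Cauchy data, the latter being guaranteed precisely by the Hamiltonian and momentum constraints $\bar{H} = \bar{H}_i = 0$. Uniqueness for this linear wave equation then forces $\square_{\bar{g}} x^\a \equiv 0$, so the coordinates are genuinely harmonic, the reduced equations coincide with $\olin{R}_{\mu \nu} = 0$, and the constraints are propagated off $\olin{\Sigma}_0$. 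Running the same argument in harmonic coordinates built on any two developments establishes local geometric uniqueness: any two developments of $(\olin{\Sigma}_0, \bar{q}_0)$ are isometric on a neighborhood of $\olin{\Sigma}_0$.

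Finally, I would promote local existence and uniqueness to the global statement. Consider the collection of all globally hyperbolic developments of $(\olin{\Sigma}_0, \bar{q}_0)$, partially ordered by isometric extension. Local geometric uniqueness allows two developments to be glued along their maximal common part, so that any two developments admit a common extension; taking the union over a totally ordered chain then yields an upper bound, and Zorn's lemma furnishes a maximal development $(\bar{M}, \bar{g})$. A further argument using the gluing property shows that this maximal element in fact extends every other development, whence it is unique up to isometry, which is the assertion of the theorem.

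I expect the main obstacle to lie in this final gluing and maximality step rather than in the analytic local theory. The subtlety is in showing that two developments agree on a genuinely open common subset and that the resulting identifications are mutually compatible, so that the union over a chain is Hausdorff and again a globally hyperbolic development; controlling the point-set topology of this quotient, and the consistent application of Zorn's lemma, is the delicate heart of the Choquet-Bruhat--Geroch argument.
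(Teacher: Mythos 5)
Your outline is correct and follows essentially the same route the paper indicates: the text preceding the theorem is exactly your first two stages (harmonic gauge reduction to a strictly hyperbolic system, and propagation of the gauge condition and constraints via the contracted Bianchi identities), and the maximality/Zorn gluing step is the content of the cited Choquet-Bruhat--Geroch argument, which the paper invokes without reproducing. You also correctly locate the genuinely delicate point in the final step (Hausdorffness of the glued union over a chain and compatibility of the identifications), so nothing essential is missing from your sketch.
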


This seminal theorem allows us to discuss the globally hyperbolic, regular maximal future development (vis-a-vis past development)  of a Riemannian Cauchy hypersurface, thus placing the Einstein equations in a firm footing with respect to the Cauchy problem. However, a few important geometric properties of the globally hyperbolic maximal development are not explained by this classic result. For instance, the completeness of causal geodesics of the maximal development, formation of trapped surfaces, black holes or Cauchy horizons - the latter cases typically arise when the former fails. There are several concrete examples of solutions of the Einstein equations\footnote{see e.g., \cite{DC_09_book, Rod_Klain_2012} where the formation of trapped surfaces was established even though the initial data does not admit one}. In order to explain and discern these phenomena, we need stronger estimates and perform further analysis on the behaviour of the Einstein equations. 

In our \cite{diss, AGS_15}, we initiated a program to study the global behaviour of translationally symmetric $U(1)$ symmetric Einstein equations. As we already discussed, the Einstein equations are a system of highly complex system of partial differential equations and are notoriously difficult to analyze globally, especially for large data (this corresponds to the `strong field' regime in physics terminology). Faced with such a scenario, we a search for any additional structure provided by the Einstein equations to improve our understanding of their global behaviour is worthwhile. The advantage of the $U(1)$ symmetric Einstein equations is that they admit  a dimensional reduction to the $2+1$ dimensional Einstein-wave map system: 

\begin{subequations}
    \begin{align}
        E_{\a \b} = R_{ \a \b} - \halb g_{\a \b} R_g=& T_{\a \b} \\
        \square_g U^i + \leftexp{(h)}{\Gamma}^i_{jk} g^{\a \b} \ptl_\a U^j \ptl_b U^k =&0, \quad (M, g)
    \end{align}
\end{subequations}
where the target for the wave map field is the hyperbolic plane $(\mathbb{H}^2, h).$ $T_{\a \b}$

\begin{align}
    T_{\a \b} \fdg = \ip{\ptl_\a U }{\ptl_\b U}_h - \halb g_{\a \b} \ip{\ptl_\sigma U}{ \ptl^\sigma U}_h
\end{align}

is the energy momentum tensor and $\square_g$ is now the covariant d'Alembertian defined on $(M, g)$. This dimensional reduction procedure can also be analogously be performed for $3+1$ dimensional Einstein-Maxwell equations for $U(1)$ symmetric Lorentzian manifolds, in which the target manifold of the wave map $U \fdg (M, g) \to (N, h)$ is the complex hyperbolic plane $\mathbb{H}^2_{\mathbb{C}}.$ The advantage of the dimensional reduction procedure is that we are working in the critical dimension for wave maps \footnote{note that wave maps are energy critical in 2+1 dimensions (see e.g., \cite{chris_tah1, jal_tah, tao_all,  krieg_schlag_ccwm, sterb_tata_long} )}. The essence of our program is intimately tied to wave maps because it can be rigorously proved that the wave maps represent the true dynamical degrees of freedom of the Einstein wave map system. 

Therefore, powerful PDE techniques come into play that are expected to allow us to get a glimpse of the global behaviour of the Einstein equations for large data. This problem is at a fortuitous crossroads of tractability and relevance, being just at one symmetry level below the general Einstein equations for general relativity. Keeping in mind all these ideas, we had proposed a program to study the global existence problem of the Einstein equations based on the following two steps: 

\begin{itemize}
    \item \textbf{(C1) Non-concentration of Energy:} The energy on a spacelike surface inside the past null cone of any point converges to zero as one approaches the vertex of the cone 
    \item \textbf{(C2) Global Existence for Small Data:} For arbitrarily small initial energy, global existence holds i.e., the maximal Cauchy development of the 2+1 Einstein wave map system is causal geodesically complete 
\end{itemize}

The step \textbf{(C1)} rules out the focusing behaviour  of the system, especially away from the mantel of the past null cone of the point of hypothetical singularity. Due to the critical dimension of wave maps and the associated invariance of energy, we can resclale the region inside the past null cone of the point of hypothetical singularity such that the energy inside that region is less than the energy needed for the existence result for small data. This allows us to jack up the regularity insider the past null cone and further allows us to use the continuity argument to smoothly extend the results past the point of first hypothetical singularity. In other words, due to the scaling symmetries of the problem, the non-concentration of energy result reduces the large-data global existence problem to the small data global existence problem, which is relatively more tractable. This is a powerful and efficacious tool from a PDE technique point of view.  

This program was successfully implemented for the equivariant Einstein wave map system\cite{diss, AGS_15}. The successful implementation of the two steps \textbf{(C1)} and \textbf{(C2)} discussed above establishes the global existence, i.e., global propagation of regularity of the initial data  of the 2+1 Einstein-wave map equations (and consequently of the corresponding $3+1$ dimensional $U(1)$ Einstein equations) but does not provide a quantitative characterization of the global asymptotic behaviour of the 2+1 Einstein wave map equations. In this connection, in the works \cite{BN_17, NG_22_1} we studied the scattering behaviour of the system. 

In the aforementioned discussion of the existence problem of the Einstein equations, we considered the spacetime metric as the main object of study. In our formulation of the initial value problem of the Einstein equations, arguably, an equally important geometric quantity is the curvature of the spacetime, in order to obtain the comprehensive characterization of the maximal development of the initial data. It is in this context that the Cartan formalism comes into the picture. 

As we already alluded to, the Einstein equations go hand in hand with the Bianchi identities: 
\begin{align}
    \bgrad_\mu \brie_{\a \b \gamma 
    \delta} + \bgrad_\gamma \brie_{\a \b \delta \mu} + \bgrad_{\delta} \brie_{\a \b \mu \gamma} =0, \quad (\bar{M}, \bar{g})
\end{align}
which results in a wave equation satisfied by the Riemann curvature tensor 
\begin{align}
    \square_{\bar{g}} \brie_{\a \b \gamma \delta} =& 2 \brie_{\a \mu \delta \nu} \brie^{\,\, \mu\,\,\nu}_{\b \,\,\gamma} -2 \brie_{\a \mu \gamma \nu} \brie^{\,\,\mu \,\,\nu}_{\b \,\,\delta} \notag\\
    & \quad +\brie_{\a \b \mu \nu} \brie_{\gamma \delta}^{\quad \mu \nu}, \quad (\bar{M}, \bar{g}).
\end{align}

This equation is referred to as the Penrose wave equation. This equation can schematically be written as 

\begin{align}\label{penrose-intro}
    \square_{\bar{g}} F_{\mu \nu} +  \brie_{\mu \nu}^{\quad \a \b} F_{\a \b} = N_{\mu \nu} (\brie), \quad (\bar{M}, \bar{g}),
\end{align}
where $\displaystyle N_{\mu \nu} (\brie)$ is the nonlinearity depending on the curvature tensor $\brie$ and the gravitatinal curvature  $F_{\mu \nu}$, modeled after the Yang-Mills curvature, is the short hand for the Riemann curvature $\brie^{\hatt{a}}_{\quad \hatt{b} \mu \nu}$ in the Cartan formalism.  
The equation \eqref{penrose-intro} is the main equation of study in our approach. 

In our study of the Penrose wave equation in the Cartan formalism, we aim to benefit from the `representation' integral formula of the equation as well as from the apriori estimates based on `energy' methods. In this context, the following  remark is pertinent. The Einstein equations, transformed using the Cartan formalism, result in a Yang-Mills theory whose gauge group is the non-compact Lorentz group $SO(3,1).$ Consequently, it is important to note that the standard Yang-Mills stress-energy tensor vanishes identically for our problem. In this regard, we aim to use the Bel-Robinson tensor instead, for the `energy' methods.   

\subsection{The Wave Equation}
An integral formula for the wave equation will play an important role in our analysis of the wave equation for the curvature. In this regard, we aim to use the framework developed by Friedlander and further refined by Moncrief
\cite{Moncrief_2005, Moncrief_2014} (see also a related independent work \cite{Klainerman_Rodnianski_07}). These 'integral formula' based ideas are inspired from the classic works of Eardley-Moncrief \cite{Eardley_Moncrief_I,Eardley_Moncrief_II} on global existence of Yang-Mills equations in $3+1$ dimensions.   To put our framework into context, let us briefly discuss the wave operator in the Minkowski space. This will allow us to compare with the framework on a curved background spacetime $(\bar{M}, \bar{g} )$ later on. We pay special attention to the null geometry and the propagation of the solution of the wave equation close the characteristics of the equation. Consider the Cauchy problem of the wave equation, 
\begin{subequations}
\begin{align}
    \square\,  u =&0, \quad \mathbb{R}^{3+1} \\
    u(0, x) =&u_0, \quad \mathbb{R}^{3} \\
    \ptl_t u(0, x) =& u_1, \quad \mathbb{R}^{3}
\end{align}
\end{subequations}

The Poisson-Kirchoff representation formula is given by
\begin{align} \label{Poisson-Kirchoff}
u(t, x) =& \frac{1}{4 \pi} \ptl_t \left( t \int_{\Vert  x' \Vert=1} u_0 (x + tx') d \omega \right) + \frac{t}{ 4\pi} \int_{\Vert x'\Vert =1} u_1 (x + tx') d \omega \\
=& \frac{1}{4 \pi t^2} \int_{\ptl B(x, t)} \Big( (t + u_1 (x') + u_0(y)) + (x-x') \cdot \grad_{x'} u_0 (x') \Big) d \sigma(x')
\end{align}
The Poisson-Kirchoff formula is based on the spherical means 
\begin{align}
 \displaystyle M_f (x, r) \fdg =& \frac{1}{\vert \mathbb{S}^3(x, r) \vert} \int_{\mathbb{S}^3 (x, r)} f \bar{\mu}_{\mathbb{S}^3}
 \intertext{equivalently}
 =& \frac{1}{4 \pi} \int_{\mathbb{S}^2} f(x + r x') d \sigma(x').
 \end{align}
and reduction of the wave operator into a 1D d'Alembertian. The strong Huygen's principle holding can be seen especially in this derivation. The Kirschoff formula can be expressed in terms of the spherical means $M_f$ is as follows: 

\begin{align}
    u(t,x) = \ptl_t ( t M_{u_0} (x, t)) + t M _{u_1} (x, t).
\end{align}

In the derivation the following result is also used on the spherical means $\displaystyle \lim_{r \to 0} M_f (x, r) = f(x), x \in \mathbb{R}^3$ and that the Laplacian on $M_f$ is equivalent to the radial Laplacian $ \displaystyle \frac{\ptl^2}{ \ptl r^2} + \frac{2}{r} \ptl_r,$ which reduces the wave equatin to flat space wave operator in 1D, which in turn can be solved explicitly using the d'Alembert formula. Furthermore, for the nonlinear wave equation, the wave equation
\begin{subequations}
\begin{align}
    \square\,  u =&N (u), \quad \mathbb{R}^{3+1} \\
    u(0, x) =&u_0, \quad \mathbb{R}^{3} \\
    \ptl_t u(0, x) =& u_1, \quad \mathbb{R}^{3}
\end{align}
\end{subequations}

can be solved using the decomposition (Duhamel) 
\begin{align}
    u = u_L + u_N
\end{align}

where the functions $u_L \fdg \mathbb{R}^{3+1} \to \mathbb{R}$ and $u_N \fdg \mathbb{R}^3 \to \mathbb{R}$ are the solutions of the corresponding homogeneous (linear) and inhomegeneous (nonlinear) respectively i.e., $u_L$ is a solution of

\begin{subequations}
\begin{align}
    \square\,  u_L =&0, \quad \mathbb{R}^{3+1} \\
    u_L(0, x) =&u_0, \quad \mathbb{R}^{3} \\
    \ptl_t u_L(0, x) =& u_1, \quad \mathbb{R}^{3}
\end{align}
\end{subequations}
 with the initial data identical to the original non-linear wave equation, and as can be seen below, $u_N$ is a solution of the non-linear wave equation with the nonlinearity identical to that of our original wave equation and vanishing initial data: 
 \begin{subequations}
\begin{align}
    \square\,  u_N =&N, \quad \mathbb{R}^{3+1} \\
    u_N(0, x) =&0, \quad \mathbb{R}^{3} \\
    \ptl_t u_N(0, x) =& 0, \quad \mathbb{R}^{3}.
\end{align}
\end{subequations}

Now then, let us define the evolution operator $S_f (x, t)$ for a function that satisfies 

\begin{subequations} \label{evo-operator}
\begin{align}
    \square\,  u =&0, \quad \mathbb{R}^{3+1} \\
    u(0, x) =&0, \quad \mathbb{R}^{3} \\
    \ptl_t u(0, x) =& f, \quad \mathbb{R}^{3}
\end{align}
\end{subequations}

from which it follows, from Kirchoff formula, that, 
\begin{align}
    u (x, t)= S_{u_1} (x, t) + \ptl_t S_{u_0} (x, t) + \int^t_0 S_{N} (x, t- t') dt'
\end{align}
where the latter term can be interpreted as the infinite sum of the operators satisfying \eqref{evo-operator}. It should be noted that the source evolution operator can also be represented as 

\begin{align}
S_f = f \frac{\sin \sqrt{\Delta} t}{ \sqrt{\Delta}}
 \intertext{and has the explicit structure}
    S_f = \frac{1}{4 \pi t} \int_{\mathbb{S} (x, t)} f(x') \bar{\mu}_{\mathbb{S} (x' t)}.
\end{align}
Subsequently, we get the explicit formula, 

\begin{align}
    u (x, t) =& \frac{1}{4 \pi t^2} \int_{\mathbb{S}^2 (x, t)} (t u_1(x') + u_0 (x') + \grad u_0 (x-x')) \bar{\mu}_{\mathbb{S}^2(x', t)} \notag\\
    & \frac{1}{ 4\pi} \int^t_0 \frac{1}{t-t'} \int_{\mathbb{S}^2 (x, t)} N (x', t') \bar{\mu}_{\mathbb{S}^2 (x')} d t'.
\end{align}

Now then, in the dimensional reduction approach, we need to analyze the wave operator in 2D. Analogous to the 3D case, we have a Poisson-Kirchoff type formula after using the Hadamard method of descent: 

\begin{align}
    u(x, t) =& \frac{1}{2 \pi} \int_{\mathbb{D} (0, 1)} \frac{u_0 (x + tx') } {\sqrt{1- \Vert x' \Vert^2}} dx' + \frac{t}{ 2\pi} \int_{\mathbb{D} (0, 1)} \frac{\grad u_0 (x + tx')}{ \sqrt{1- \Vert x'\Vert^2}} x' dx' \notag\\
    & + \frac{t}{ 2\pi} \int_{\mathbb{D} (0,1)} \frac{u_1 (x + tx')}{1-\Vert x' \Vert^2} dx'
\end{align}

Now then, using similar analysis, for the non-linear wave equation we have, based on the Duhamel's principle, 

\begin{align}
    u(x, t) =& \frac{1}{2 \pi} \int_{\mathbb{D} (0, 1)} \frac{u_0 (x + tx') } {\sqrt{1- \Vert x' \Vert^2}} dx' + \frac{t}{ 2\pi} \int_{\mathbb{D} (0, 1)} \frac{\grad u_0 (x + tx')}{ \sqrt{1- \Vert x'\Vert^2}} x' dx' \notag\\
    & + \frac{t}{ 2\pi} \int_{\mathbb{D} (0,1)} \frac{u_1 (x + tx')}{1-\Vert x' \Vert^2} dx'
    + \frac{1}{ 2\pi} \int^t_0 \int_{\mathbb{D} (x, t-t')} \frac{N (x',t')}{ \sqrt{t^2 - \Vert x-x' \Vert^2}} dx' dt'
\end{align}
which is obtained by the fact that, if the initial data independent of the component $x^3,$ the solution $u (t, x)$ is also independent of $x^3.$ Let us make a further comment about the Huygens' principle. In view of the fact that an integral over a sphere reduces to an integral over a disk in the Poisson-Kirchoff formula, we get a `bulk' integral over a disk. As a consequence, the integral equation of the solution of the wave equation does not obey the strong Huygens' principle. This discussion will be relevant to isolate the reasons for the Huygens' principle holding differently for various scenarious used in our problem. Note that the source operator in 2D reduces to 

\begin{align}
    S_f (x, t) = \frac{1}{ 2\pi} \int_{\mathbb{D} (x, t)} \frac{f(x')}{\sqrt{t^2 - \Vert x-x' \Vert^2}} dx'.
\end{align}

\subsection{$3+1$ Yang-Mills fields in $\mathbb{R}^{3+1}$}

Recall that the Yang-Mills potential is a Lie-algebra valued $1-$form 
\begin{align}
    A =& A^{(a)} \mbo{e}_{(a)} dx^\mu 
    \intertext{where, $\mbo{e}_{(a)}$ is a real matrix representation of the Lie algebra of a compact Lie group, written succintly as}
    =& A_\mu dx^\mu
\end{align}
The curvature of the Yang-Mills potential is given by

\begin{align}
    F=& F_{\mu \nu} ^{(a)} \mbo{e}_{(a)} dx^\mu \wedge dx^\nu
    \intertext{succintly}
    =& F_{\mu \nu} dx^\mu \wedge dx^\nu.
    \intertext{The curvature can be expressed interms of the Yang-Mills potential as}
    F_{\mu \nu}=& \ptl_ \mu A_\nu - \ptl_\nu A_\mu + [A_\mu, A_\nu].
\end{align}

The Yang-Mills field equations follow from the variational principle 

\begin{align}
    S_{\text{YM}} = - \frac{1}{4} \int \Vert F \Vert^2 \bar{\mu}_{\bar{g}}
\end{align}
and given by, 

\begin{align}
    \grad_\mu F^{\mu \nu} =0
    \intertext{where the derivative $\grad$ is defined as}
    \grad_\nu F_{\a \b} \fdg= \ptl_\nu F_{\a \b} + [A_\nu, F_{\a \b} ].
\end{align}
Analogous to the Riemann curvature tensor, the Yang-Mills curvature $F$ also satisfies the Bianchi identity: 

\begin{align}
    \grad_\delta F_{\mu \nu} + \grad_\mu F_{\nu \delta} + \grad_{\nu} F_{\delta \mu} =0
\end{align}

which results in the wave propagation equation for the curvature $F$, after taking the derivative and using the field equations: 

\begin{align} \label{YM-curv-prop-mink}
\grad^\a \grad_\a F_{\mu \nu} =& 2[F^\a_\mu, F_{\a \nu}]
\intertext{using the long form of the covariant derivative $\grad$}
\grad^\a \grad_\a F_{\mu \nu} =& \ptl^\a \ptl_\a F_{\mu \nu} + [A^\a, [A_\a, F_{\mu \nu}]] + 2 \ptl_\a ( [A^\a, F_{\mu \nu}]) -[\ptl_\a A^\a, F_{\mu \nu}]
\intertext{for the sake of brevity, let us denote this equation as}
\square F_{\mu \nu} =& N_{\mu \nu} (F)
\end{align}
This is the main equation of study for the Yang-Mills fields in the Minkowski space $\mathbb{R}^{3+1}$. It may also be recalled that the energy-momentum tensor of the Yang-Mills fields is 

\begin{align}
    T_{\mu \nu} = F_{\mu \a} F^{\a}_{\quad \nu} - \frac{1}{4} \bar{g}_{\mu \nu} F_{\a \b} F^{\a \b}. 
\end{align}

Now consider the initial value problem of \eqref{YM-curv-prop-mink}. As we already discussed, the representation formula of the solution of the initial value problem of \eqref{YM-curv-prop-mink} can be decomposed into the linear homogeneous part and the nonlinear part with vanishing initial data. 

\begin{align} \label{duhamel-YM-Mink}
    F= F_L + F_N
\end{align}
where $F_L$ is the homogeneous solution with the initial data 
\begin{subequations}
\begin{align}
    F_L (0, x) =& F(0, x) =\fdg F_0 (x) \\
    \ptl_t F_L(0, x) =& F(0, x) =\fdg F_1(x)
\end{align}
\end{subequations}
and $F_N$ satisfies the equation, 
\begin{align}
  \grad^\a \grad_\a (F_N)_{\mu \nu} =& 2[F^\a_\mu, F_{\a \nu}]
\end{align}
with vanishing initial data.

\begin{subequations}
\begin{align}
    F_N (0, x) =& 0 \\
    \ptl_t F_N(0, x) =& 0.
\end{align}
\end{subequations}

Let us now briefly sketch the method of controlling the $\Vert \cdot \Vert_{L^\infty}$ norm of the curvature by Eardley-Moncrief. We would like to emphasize that the actual field equation is sub-critical with respect to the energy-norm of the Yang-Mills fields obtained from the energy momentum tensor. This structure is a bit different from the energy-critical problem under our consideration. Nevertheless, there are few structural aspects of this Yang-Mills problem that are relevant for our problem. 

The main difficulty in the $\Vert F \Vert_{L^\infty} $ estimate of Eardley-Moncrief is  controlling the bulk nonlinear terms that occur in the representation formula \eqref{duhamel-YM-Mink}. It may be noted that the  nonlinear propagation equation contain commutator terms that are quadratic in curvature. The integral equation involves the terms in the past light cone and as well as the initial data. These integral terms are then estimated in terms of the initial data, in particular the energy, which is conserved in time, and the $\Vert \cdot \Vert_{L^\infty}$ norm of the Yang-Mills curvature itself. Consequently, this estimate provides an integral estimate for the 
$\Vert \cdot \Vert_{L^\infty}$ norm of the curvature, involving the initial data. 

In performing these estimates, the special structure offered by the Cronstr\"om gauge is particularly useful. It was shown separately that the Yang-Mills fields in any gauge can be transformed into the Cronstr\"om gauge. Subsequently,  using energy norms equivalent to  $(H^2 \times H^1),$ it is shown that they cannot blow up in finite time. In this step, the aforementioned $\Vert \cdot \Vert_{L^\infty}$ estimate of the curvature plays an important role, in controlling the energy norms.

Let us discuss how they use the structure of the Yang-Mills field equations to deal with these bulk-integral nonlinear terms,
\begin{align}
    \int  [A^\a, [A_\a, F_{\mu \nu}]] + 2 \ptl_\a ( [A^\a, F_{\mu \nu}]) -[\ptl_\a A^\a, F_{\mu \nu}] + 2[F^\a_\mu, F_{\a \nu}]
\end{align}
term by term
\begin{itemize}
    \item The spacetime bulk-integral quantity: $\displaystyle \int \ptl_\a [A^\a, F_{\a \b}] $ is transformed into a boundary term. Following the use of the gauge condition $x^\mu A_\mu =0$, a part of the boundary term cancels a term in the representation formula of $F_L$, and the remaining boundary terms can be estimated in terms of the initial data in the temporal gauge. 
    
    \item The bulk quantity $\displaystyle \int -[\ptl_\a A^\a, F_{\mu \nu}]  + [A^\a, [A_\a, F_{\mu \nu}]]$ can be transformed using $\displaystyle A_\nu = \int^1_0 x^\mu F_{\mu \nu} (\lambda x) \lambda d \lambda $ in the Cronstr\"om gauge into a quantity involving a cubic term in $F$. This term can in turn be estimated in term of $\Vert F\Vert_{L^\infty}$ using conservation of energy and the fact that $x^\mu F_{\mu \nu} (\lambda x) = r \ell^\mu F_{\mu \nu} (\lambda x) $ in the past null cone. 
    \item Finally, the integrands in the bulk term $\displaystyle \int 2 [F^\a_\mu, F_{\a \nu}]$ can be represented as square integrable terms upon projection into an orthonormal basis. These terms can in turn be estimated in terms of the initial energy and $\Vert F \Vert_{L^\infty}$ norm of the curvature tensor $F.$  
\end{itemize}

Therefore, it follows that the estimate on the representation formula can be schematically represented as 

\begin{align}
    \Vert F \Vert^2_{L^\infty} \leq c(t) \int^t_0 \Vert F \Vert_{L^\infty} + c'(t)
\end{align}
where $c$ and $c'$ are positive functions of depending only on the initial energy, temporal gauge initial data and $t$. Subsequently, establishing that $\Vert F \Vert_{L^\infty}$ is continuous, the Gronwall lemma is applied to show that the $\Vert F\Vert_{L^\infty}$ norm of the Yang-Mills curvature does not blow up in finite time. 

Further, global existence is established using the energy norms 

\begin{align}
    \mathcal{E}_0 \fdg = \halb \int_{\mathbb{R}^3} \Sigma_{i,j} \left( E_i E_i + \ptl_j A_i \ptl_j A_i + m^2 A^2_i  \right)
    \intertext{where $m>0$ is an arbitrary positive constant and}
    \mathcal{E}_1 \fdg = \halb \int_{\mathbb{R}^3} \Sigma_{i,j} 
    \left( \ptl_j E_i \ptl_j E_i + (\ptl_j \ptl_k A_i)^2 \right) 
\end{align}

and the Gronwall type estimates 

\begin{align}
\vert \frac{\ptl \mathcal{E}_0}{\ptl t} \vert  \leq c  \Vert F\Vert_{L^\infty} \mathcal{E}_0
\intertext{and}
\vert \frac{\ptl \mathcal{E}_1}{\ptl t} \vert  \leq c  \Vert F\Vert_{L^\infty} \mathcal{E}_0 + c' \Vert F\Vert_{L^\infty} \mathcal{E}_1.
\end{align}
In the work of Eardley-Moncrief \cite{Eardley_Moncrief_II}, the it is sufficient to show that the energies $\mathcal{E}_0$ and $\mathcal{E}_1$ do not blow up, in view of the fact that the energy norm $(\mathcal{E}_1 + \mathcal{E}_0)^{1/2}$  is equivalent to the $ (H^2 \times H^1 )$  norm of the Yang-Mills solution. 

As we already alluded to, there is close analogy between the Yang-Mills field equations and the Einstein equations via the Cartan formalism. However, from a PDE perspective there are significant differences to be noted. Firstly, the Yang-Mills problem is energy sub-critical and the general $3+1$ Einstein equations are super-critical in nature. Indeed, as we already mentioned an $\Vert \Vert_{L^\infty}$ estimate for the gravitational $F$ tensor is unreasonable, in view of well-known examples where this is violated. A relevant question for the general Einstein equations is the criterion for breakdown of the Einstein equations. There are several important works in this direction \cite{M_anderson_01, KRod_10, QWang_10, KL_RO_SZ_2012} (see especially \cite{KL_RO_SZ_2012} in which it is established that as long as the $L^2$  curvature of the inital data is bounded the solutions for the Einstein equations can be smoothly extend in time).

However, let us recall that, in the case of the $U(1)$ symmetric Einstein equations (i.e., the Einstein equations on a translational $3+1$ dimensional $U(1)$ symmetric spacetime) such an estimate over the entire maximal development of the regular initial data is quite reasonable indeed. In this endeavour, as an intermediary step,  we would like to derive a `break-down' criterion for this step in this case. To prevent any confusion for the reader, we would like to clarify that such a `breakdown' criterion for $3+1$ dimensional $U(1)$ symmetric spacetimes would merely be an intermediary mathematical and technical condition that hopefully can be weakened to be shown eventually that the breakdown of the solutions of this system does not occur at all. 

As a side note, for the Einstein equations on dynamical axisymmetric spacetimes, such as ones `close to' the Kerr black hole spacetimes, we would expect propagation of regularity of initial data in the domain of outer communications, as opposed to the entire maximal development of the initial data. In this case, some of the framework developed in this work would also be useful, but the axisymmetric problem is expected to be considerably more delicate than the current translational case due to regularity and criticality issues. A discussion in this direction for the axisymmetric problem can be found in \cite{NG_21_1}. 

It should also be pointed out that if the spatial Cauchy hypersurface of the $U(1)$ symmetric spacetime $(\bar{M}, \bar{g})$ is compact, then there is a possibility that a big-bang type singularity can form in the past direction. In such cases,  by `global existence'  we mean future global existence (see e.g., \cite{Ycb-Mon_01}).

\section{The Cartan Formalism and Curvature Propagation of Einstein spacetimes}
Now let us turn our attention to the Einstein equations. Consider, the vacuum Einstein equations for now: 

\begin{align}
    \bar{E}_{ \mu \nu} \fdg= \olin{\text{Ric}} _{\mu \nu} - \halb \bar{R} \bar{g}_{\mu \nu}=& 0, \quad (\bar{M}, \bar{g})
    \intertext{equivalently}
    \olin{\text{Ric}}_{\mu \nu} =& 0, \quad (\bar{M}, \bar{g})
\end{align}
It is well-known that the Einstein tensor $\bar{E}_{\mu \nu}$ is divergence-free by virtue of the Bianchi identities on the Lorentzian manifold $(\bar{M}, \bar{g})$. Now then, let us recall the Bianchi identities:

\begin{align}
\intertext{First Bianchi Identity}
    \brie^\a _{\quad \b \gamma \delta} + \brie^\a _{\quad \gamma \delta \b} + \brie^\a_{\quad \delta \b \gamma} =&0, \quad (\bar{M}, \bar{g}) \quad \textbf{(B1)} \\
    \intertext{Second Bianchi Identity}
    \bgrad_\mu \brie^{\a}_{\quad \b \gamma \delta} + \bgrad_\gamma  \brie^{\a}_{\quad \b \delta \mu} + \bgrad_\delta \brie^{\a}_{\quad \b \mu \gamma} =&0, \quad (\bar{M}, \bar{g}) \quad \textbf{(B2)}.
\end{align}

In the spacetime satisfying the Einstein equations, it follows that 

\begin{align}
    \bgrad^\a \brie_{\a \b \gamma \delta} =0, \quad (\bar{M}, \bar{g}). 
\end{align}
This equation can be compared to a similar equation satisfied by the Yang-Mills curvature. Taking a further derivative of the Bianchi identity \textbf{(B1)} and using the properties of the Riemann curvature tensor of $(\bar{M}, \bar{g}),$ we get the 
curvature propagation equation: 

\begin{align}
  \square_{\bar{g}} \, \brie^{\a}_{\quad \b \gamma \delta} \fdg =&  \bgrad^\nu \bgrad_\nu \brie^{\a}_{\quad \b \gamma \delta} \notag\\
  =& 2 \brie^\a_{\quad \mu \delta \sigma}  \brie^{\,\,\mu \,\, \sigma}_{\b \,\, \gamma} -2 \brie^{\sigma}_{\quad \mu \gamma \delta}  \brie^{\,\,\mu \,\, \sigma}_{\b \,\, \delta} \notag\\
    & - \brie^{\quad \mu \sigma}_{ \gamma \delta} \brie^\a_{\quad \b \mu \sigma}, \quad (\bar{M}, \bar{g}).
\end{align}
This equation, referred to as the `Penrose wave equation', is the main equation of study in our proposed approach. Let us now rewrite this equation using the Cartan formalism. 

In the Cartan formalism, the metric is not the main object for the representation of the geometric structures on these spacetimes $(\bar{M}, \bar{g}).$ The geometric objects are constructed from a tetrad.  We shall represent the basis vectors as $\{ \mbo{e}_{\hatt{a}}, \, \hatt{a}=1, 2, 3, 4\}$ and the dual basis vectors as $ \{ \mbo{e}^{\hatt{b}},\, \hatt{b}=1,2,3,4 \} $ so that 
\begin{align}
\ip{\mbo{e}^{\hatt{b}}}{\mbo{e}_{\hatt{a}}} = \delta^{\hatt{b}}_{\hatt{a}}
\end{align}
Let us define the torsion, $T$ on the principal bundle manifold $\bar{M}$  (using the Levi-Civita connection)as 
\begin{align}
T (X,Y) = \leftexp{(\bar{g})}{\grad}_X Y - \leftexp{(\bar{g})}{\grad}_Y X - [X, Y]
\end{align}
where $X, Y \in T(M)$ and $[\cdot, \cdot]$ is the Lie bracket in $\bar{M}.$ In a coordinate basis, this can be represented in general as 
\begin{align}
T^{\hatt{a}}_{\hatt{b} \hatt{c}} = \leftexp{(\bar{g})}{\Gamma}^{\hatt{a}}_{\hatt{b} \hatt{c}} - \leftexp{(\bar{g})}{\Gamma^{\hatt{a}}_{\hatt{c}\hatt{b}}} - C^{\hatt{a}}_{\hatt{b} \hatt{c}}
\end{align}
where $C^{\hatt{a}}_{\hatt{b} \hatt{c}}$ are the structure constants: 
\begin{align}
C^{\hatt{a}}_{\hatt{b} \hatt{c}} = \ip{\mbo{e}^{\hatt{a}} }{[\mbo{e}_{\hatt{a}}, \mbo{e}_{\hatt{c}}]}
\end{align}
of the basis $\mbo{e}_{\hatt{a}}.$
Now the connection $1-$forms or the spin connection in Cartan formalism is defined as 
\begin{align}
\Theta^{\hatt{a}}_{\hatt{c}} \fdg= \leftexp{(g)}{\Gamma}^{\hatt{a}}_{\hatt{b} \hatt{c}} \mbo{e}^{\hatt{a}}
\end{align} 
and it is well known that 
\begin{align}
d g_{ \hatt{a} \hatt{b}} = \Theta_{\hatt{b} \hatt{a}} + \Theta_{\hatt{a} \hatt{b}} 
\end{align}
It may be recalled in the our chosen orthonormal basis, we have 
\begin{align} \label{spin-sum-identity}
 \Theta_{\hatt{b} \hatt{a}} + \Theta_{\hatt{a} \hatt{b}}  =0
\end{align}
Now then, we shall use the Cartan's first structural equation, that relates the torsion with the spin-connections 

\begin{align}
T^{\hatt{a}}  = d \omega^{\hatt{a}} + \Theta^{\hatt{a}}_{\hatt{c}} \wedge \mbo{e}^{\hatt{c}}.
\end{align}
In case the torsion vanishes, we have
\begin{align}
d \omega^{\hatt{a}} = - \Theta^{\hatt{a}}_{\hatt{c}} \wedge \mbo{e}^{\hatt{c}}.
\end{align}
We shall use the above equation, together with \eqref{spin-sum-identity} throughout this work. 
Now then, let us define the curvature $ \brie \fdg T_p(\bar{M}) \times T_p (\bar{M}) \times T_p(\bar{M}) \to T_p (\bar{M})$ as 
\begin{align}
\brie (X,Y) Z =  \left( \leftexp{(\bar{g})}{\grad}_X   \leftexp{(\bar{g})}{\grad}_Y -  \leftexp{(\bar{g})}{\grad}_Y  \leftexp{(\bar{g})}{\grad}_X  \right)  Z  
\end{align}
which can be represented as a `curvature' $2-$ in the Cartan formalism, in terms of the dual basis vectors as 
\begin{align}
\brie^{\, \hatt{a}}_{\,\,\,\, \hatt{b}}  = - \halb \text{Riem}^{\,\hatt{a}}_{\,\,\,\, \hatt{b} \hatt{c} \hatt{d}} \, \mbo{e}^{\hatt{c}} \mbo{e}^{\hatt{d}}
\end{align}
Now, then in terms of the spin connection, we have
\begin{align}
\brie^{\,\hatt{a}}_{\,\,\,\,\hatt{b}}  = d\, \Theta ^{\hatt{a}}_{\,\,\,\, \hatt{b}}  + \Theta ^{\hatt{a}}_{\,\,\,\, \hatt{c}} \wedge \Theta ^{\hatt{a}}_{\,\,\,\, \hatt{b}}
\end{align}
which is the so-called Cartan's second structural equation.

Now consider, in Cartan formalism, 

\begin{align}
    \bgrad_\a \brie^{\hatt{a}}_{\quad \hatt{b} \mu \nu} =& \mbo{e}^{\hatt{a}}_\lambda \mbo{e}^\sigma_{\hatt{b}}  \bgrad_\a \brie^\lambda_{\quad \sigma \mu \nu} \\
    =& \bgrad_\a \brie^{\hatt{a}}_{\quad \hatt{b} \mu \nu} + \Theta^{\hatt{a}}_{\quad \hatt{c} \a} \brie^{\hatt{c}}_{\hatt{b} \mu \nu} - \Theta^{\hatt{c}}_{\quad \hatt{b} \a} \brie^{\hatt{a}}_{\quad \hatt{c} \mu \nu} .
\end{align}

It may be noted that we can treat $\displaystyle \brie^{\hatt{a}}_{\quad \hatt{b} \mu \nu} $ as the analog of the Yang-Mills curvature $F_{\mu \nu}$ and likewise $\Theta^{\hatt{a}}_{\quad \hatt{b} \mu}$ as the  analog of the Yang-Mills vector potential $A_\mu.$ Thus, we denote them as such for the sake of brevity and convenience. In Cartan formalism, the gravitational curvature wave equation can be represented as 

\begin{align}
    \bgrad^\a F_{\a \b} =0, \quad (\bar{M}, \bar{g})
\end{align}
where $(\bar{M}, \bar{g})$ satisfies the Einstein equations and the relation between the gravitational curvature $F_{\mu \nu}$ and the gravitational `vector potential' $A_\mu$ is 

\begin{align}
F^{\hatt{a}}_{\,\, \hatt{b} \mu \nu} = \ptl_\mu A^{\hatt{a}}_{\,\, \hatt{b} \mu} - \ptl_\nu A^{\hatt{a}}_{\,\, \hatt{b} \nu} + A^{\hatt{a}}_{\,\, \hatt{c} \mu} A^{\hatt{c}}_{ \,\, \hatt{b} \nu}  - A^{\hatt{a}}_{\,\, \hatt{c} \nu} A^{\hatt{c}}_{ \,\, \hatt{b} \mu} 
\end{align}
which again is analogous in structure to the Yang-Mills curvature and vector potentials. This structure has to be compared to Cartan's first structural equation. 

This form of representation shall be helpful for us later. In particular, we shall observe that in the Kaluza-Klein ansatz, we shall note that a certain combination of the gravitational F-tensor is independent of the undifferentiated `vector potential' tensor. Let us now consider the wave propagation equation for the gravitational tensor $F$:

\begin{align}
    \square_{\bar{g}} F_{\mu \nu} + \brie^{\quad \a \b }_{\mu \nu} F_{\a \b} = N_{\mu \nu} (\Theta, \brie), \quad (\bar{M}, \bar{g})
\end{align}

where $N_{\mu \nu} (\Theta, \brie)$ on the right hand side is a non-linearity. 

expanding out, using Cartan formalism, this can be rewritten as 

\begin{align}
    &\bar{g}^{\a \b} \Big( \bgrad_\b \big( \bgrad_{\hatt{a}}  \brie^{\hatt{a}}_{\quad \hatt{b} \mu \nu} + \Theta^{\hatt{a}}_{\,\,\hatt{c} \a} \brie^{\hatt{c}}_{\quad \hatt{b} \mu \nu} - \Theta^{\hatt{a}}_{\,\, \hatt{b} \a} \brie^{\hatt{a}}_{\quad \hatt{c} \mu \nu} \big) 
    \notag\\
    &+ 
    \Theta^{\hatt{a}}_{\,\, \hatt{c} \b} (\bgrad_\a \brie^{\hatt{c}}_{\quad \hatt{b} \mu \nu}  
    + \Theta^{\hatt{c}}_{\,\, \hatt{d} \a}  \brie^{\hatt{d}}_{\quad \hatt{b} \mu \nu} 
    -\Theta^{\hatt{d}}_{\,\, \hatt{b} \a}  \brie^{\hatt{c}}_{\quad \hatt{d} \mu \nu} ) \notag\\
    &- \Theta^{\hatt{c}}_{\,\, \hatt{b} \b} (\bgrad_\a \brie^{\hatt{a}}_{\quad \hatt{c} \mu \nu} 
    + \Theta^{\hatt{a}}_{\,\, \hatt{d} \a}  \brie^{\hatt{d}}_{\quad \hatt{c} \mu \nu}
    -\Theta^{\hatt{d}}_{\,\, \hatt{c} \a}  \brie^{\hatt{a}}_{\quad \hatt{d} \mu \nu}
    )    \Big)  \notag\\
    & = - \brie_{\mu \nu}^{\quad \gamma \delta} \brie^{\hatt{a}}_{\quad \hatt{b} \gamma \delta} + 
    2 \brie^{\hatt{a}}_{\quad \hatt{c} \mu \delta} \brie^{\hatt{c}\quad\delta}_{\,\,\,\hatt{b} \nu} 
    \notag\\
    & \quad -2 \brie^{\hatt{a}}_{\quad \hatt{c} \nu \delta} \brie^{\hatt{c}\quad\delta}_{\,\,\,\hatt{b} \mu} 
\end{align}
 
 rearranging the terms we can rewrite this equation in the form 
\begin{align}
    \bar{g}^{\a \b} \bgrad_\a \bgrad_\b \brie^{\hatt{a}}_{\quad \hatt{b} \mu \nu} + \brie_{\mu \nu}^{\quad \gamma \delta} \brie^{\hatt{a}}_{\quad \hatt{b} \gamma \delta}  = N_{\mu \nu} (\Theta, \brie)
\end{align}

where $N_{\mu \nu}$ is 

\begin{align}
   N_{\mu \nu} =  &-\bar{g}^{\a \b} \Big( \bgrad_\b \big( \Theta^{\hatt{a}}_{\,\,\hatt{c} \a} \brie^{\hatt{c}}_{\quad \hatt{b} \mu \nu} - \Theta^{\hatt{a}}_{\,\, \hatt{b} \a} \brie^{\hatt{a}}_{\quad \hatt{c} \mu \nu} \big) 
    \notag\\
    &+ 
    \Theta^{\hatt{a}}_{\,\, \hatt{c} \b} (\bgrad_\a \brie^{\hatt{c}}_{\quad \hatt{b} \mu \nu}  
    + \Theta^{\hatt{c}}_{\,\, \hatt{d} \a}  \brie^{\hatt{d}}_{\quad \hatt{b} \mu \nu} 
    -\Theta^{\hatt{d}}_{\,\, \hatt{b} \a}  \brie^{\hatt{c}}_{\quad \hatt{d} \mu \nu} ) \notag\\
    &- \Theta^{\hatt{c}}_{\,\, \hatt{b} \b} (\bgrad_\a \brie^{\hatt{a}}_{\quad \hatt{c} \mu \nu} 
    + \Theta^{\hatt{a}}_{\,\, \hatt{d} \a}  \brie^{\hatt{d}}_{\quad \hatt{c} \mu \nu}
    -\Theta^{\hatt{d}}_{\,\, \hatt{c} \a}  \brie^{\hatt{a}}_{\quad \hatt{d} \mu \nu}
    )    \Big)  \notag\\
    &  + 
    2 \brie^{\hatt{a}}_{\quad \hatt{c} \mu \delta} \brie^{\hatt{c}\quad\delta}_{\,\,\,\hatt{b} \nu} 
     -2 \brie^{\hatt{a}}_{\quad \hatt{c} \nu \delta} \brie^{\hatt{c}\quad\delta}_{\,\,\,\hatt{b} \mu}.
\end{align}

Subsequently, using the notation $\displaystyle F_{\mu \nu} = \brie^{\hatt{a}}_{\quad \hatt{b} \mu \nu}$, drawing on the Yang-Mills analogy, we can recover the analog of the Yang-Mills field equations, 

\begin{align}
  \bgrad^\a \bgrad_\a  F_{\mu \nu} + \brie_{\mu \nu}^{\quad \gamma \delta} F_{\gamma \delta} = N_{\mu \nu} (\Theta, \brie).
\end{align}

\subsection{An Integral Equation for the Curvature}

Analogous to the wave equation on the Minkowski space already discussed earlier in our article, we can construct an integral formula for the curvature wave equation. The integral equation is given by: 

\begin{align}
F_{\mu \nu} (x)=& \frac{1}{2 \pi} \int_{D_p}  (V^+)^{ \mu' \nu'}_{\mu \nu} (x, x') N_{\mu' \nu'} (x') \bar{\mu} (x') + \int_{C_p} U^{\mu' \nu'}_{\mu \nu} (x, x') N_{\mu' \nu'} (x') \bar{\mu}_{\Gamma} (x')  \notag\\
& + \frac{1}{2 \pi} \int_{S_p} \star  \big ( (V^+)^{ \mu' \nu'}_{\mu \nu} (x, x') \grad^{\gamma'} F_{\mu' \nu'} (x') - F_{\mu' \nu'} \grad^{\gamma'} (V^+)^{ \mu' \nu'}_{\mu \nu} (x, x') \big)   \notag \\
& + \frac{1}{2 \pi} \int_{\sigma_p} U^{\mu' \nu'} (x, x') _{\mu \nu} (2 \grad^\gamma t(x') F_{\mu' \nu'} (x') + F_{\mu' \nu'} (x') \square t (x') ) \notag\\
&- \ip{\grad t}{\grad' \Gamma} (V^+)^{\mu \nu}_{\mu \nu} (x, x') (x') ) \bar{\mu}_{t, \Gamma} (x')
\end{align}
$\bar{\mu}_{\Gamma}$ is a Leray form, $U^{\mu' \nu'}_{\mu \nu} (x, x') = \kappa (x, x') \tau^{\mu' \nu'}_{\mu \nu},$ $\Gamma$ is an optical function that satisfies the Eikonal equation. 

In view of the self-adjoint nature of the linear operator $L$ and the reciprocity between the forward fundamental solution and the backward fundamental solution (Theorem 5.2.2 in \cite{Friedlander_1975}), the tensorial quantity $V^+ (x, x')$ is the solution of a characteristic initial value problem for the homogeneous wave equation.  

\begin{align}
    \square_{\bar{g} (x')} (V^+)^{\mu \nu}_{\a \b} + \rie^{\mu' \nu'}_{\quad \delta' \gamma'} (x') (V^+)^{\gamma' \delta'} + \text{Ric}(x') V^+ (x, x')-\text{Ric} (x') V^+ (x, x') =0
\end{align}
the initial data on the null cone $C_p$ is denoted as $V_0$ which satisfies a transport equation \cite{Moncrief_2005}. 

Now let us look at the following transformations of Moncrief \cite{Moncrief_2005} to deal with the bulk terms. The a priori transformations of the bulk terms are based on the integral transformations and Stokes' theorem. 

Consider the identity, 

\begin{align}
    &\bgrad_{\gamma'}\left(  (V^+)^{\mu \nu}_{\a \b} \bgrad^{\gamma'}
  F_{\mu' \nu'}  \right) = \left( \bgrad_{\gamma'} (V^+)_{\a \b}^{\mu' \nu'} \right) \bgrad^{\gamma'} F_{\mu' \nu'} + 
  (V^+)^{\mu' \nu'}_{\a \b} \bgrad_{\gamma'} \bgrad^{\gamma'} F_{\mu' \nu'} \notag\\
  \intertext{further}
  &= \bgrad_{\gamma'} \bgrad^{\gamma'} (V^+)^{\mu' \nu'}_{\a \b} F_{\mu' \nu'} + \bgrad^{\gamma'} (V^+)^{\mu' \nu'}_{\a \b} \bgrad F_{\mu' \nu'} + (V^+)^{\mu' \nu'}_{\a \b} \bgrad_{\gamma'} \bgrad^{\gamma'} F_{\mu' \nu'}
  \end{align}

 Recall the (tensorial) linear wave operator, $\displaystyle Lu_{\a \b} \fdg =  \bgrad_\gamma \bgrad^\gamma \, u_{\a \b} + \brie_{\a \b}^{\quad \gamma \delta} u_{\gamma \delta}. $

Consider then,

\begin{align}
    \bgrad_{\gamma'} ( (V^+)^{\mu' \nu'}_{\a \b} \bgrad^{\gamma'} F_{\mu' \nu'} ) = \bgrad_{\gamma'} (V^+)^{\mu' \nu'}_{\a \b} \bgrad^{\gamma'} F_{\mu' \nu'} + (V^+)^{\mu' \nu'}_{\a \b} \bgrad_{\gamma'} \bgrad^{\gamma'} F_{\mu' \nu'}.
\end{align}
Likewise, consider, 

\begin{align}
     \bgrad_{\gamma'} \bgrad^{\gamma'} (V^+)^{\mu \nu'} F_{\mu' \nu'} + \bgrad^{\gamma'} (V^+)^{\mu' \nu'}_{\a \b} \bgrad_\gamma F_{\mu \nu'}.
\end{align}

Thus, we have,

\begin{align}
    (V^+)^{\mu' \nu'}_{\a \b} L F_{\mu' \nu'} = 
    \big( (V^+)^{\mu' \nu'}_{\a \b} \square_{\bar{g}} F_{\mu' \nu'} + \brie^{\mu' \nu'}_{\quad \gamma \delta} F_{\gamma \delta} (V^+)_{\a \b} \big).
\end{align}
It follows that, 

\begin{align}\label{F-div-term}
    (V^+)^{\mu' \nu'}_{\a \b} N_{\mu' \nu'} =& (V^+)^{\mu' \nu'}_{\a \b} L F_{\mu' \nu'} - (L V^+) ^{\mu' \nu'}_{\a \b} F_{\mu' \nu'} (x') \notag\\
    =& (V^+)^{\mu' \nu'}_{\a \b} \square_{\bar{g}} F_{\mu' \nu'} -\square_{\bar{g}} (V^+)^{\mu' \nu'}_{\a \b} F_{\mu' \nu'} \notag\\
    =& \bgrad_{\gamma'} (V^{\mu' \nu'}_{\a \b} \bgrad^{\gamma'} F_{\mu' \nu'} - \bgrad^{\gamma'} (V^+)^{\mu' \nu'}_{\a \b} F_{\mu' \nu'} ).
\end{align}

Crucially, it may be noted that the final term in \eqref{F-div-term} is a purely spacetime divergence term. Also note that these transformations do not need the fact $V^+$ is a solution of the characteristic initial value problem with the data given by $V_0.$

Let us now decompose these `tail' contributions are follows 

\begin{align}
    F^{\text{tail}}_{\a \b} = \leftexp{I}{F^{\text{tail}}_{\a \b}} + \leftexp{II}{F^{\text{tail}}_{\a \b}}
\end{align}
where we have followed the terminology of Friedlander-Moncrief. We have, 

\begin{align}
    \leftexp{I}{F^{\text{tail}}_{\a \b}} = \frac{1}{ 2 \pi} \int_{C_p}  \bgrad^{\gamma} \Gamma(x, x') \Big( (V^+)^{\mu' \nu'}_{\a \b} (x, x') \bgrad_{\gamma'} F_{\mu' \nu'} (x') - F_{\mu' \nu'} (x') \bgrad_{\gamma'} (V^+)^{\mu' \nu'}_{\a \b} (x, x') \Big) \gamma_{\Gamma} (x')
\end{align}

which can be transformed to the following: 

\begin{align}
   & \leftexp{I}{F^{\text{tail}}_{\a \b}}(x) \notag\\
   &= \frac{1}{2 \pi} \int_{C_p} \Big( \bgrad^{\gamma'} \Gamma \bgrad_{\gamma'} (V_0)^{\mu' \nu'}_{\a \b} (x, x') F_{\mu' \nu'} (x') + F_{\mu' \nu'} (L U^{\mu' \nu'}_{\a \b} (x, x') \\
   &+ (\square_{\bar{g}} \Gamma -4) (V_0)^{\mu' \nu'}_{\a \b} (x, x') ) \Big) \bar{\mu}_{\Gamma} (x').
\end{align}

Defining the following null coordinates, using the normal coordinates $\{ (x^\mu)' \}$  as follows: 
\begin{align}
    \{ (x^\mu)' \}  \longleftrightarrow \{x^{0'}, r', \theta, \phi \}
\end{align}
as 
\begin{subequations}
\begin{align}
    \xi = t'+ r', \quad \eta= t'-r' \\
    t'= \frac{\xi+ \eta}{2}, \quad r'= \frac{\xi-\eta}{2}.
\end{align}
\end{subequations}
We have the following decomposition
\begin{align}
    \bar{\mu}_{\Gamma} =& \frac{\sqrt{\vert -\bar{g}\vert}}{u} du \wedge d\theta \wedge d \phi  \notag\\
    \bar{\mu}_{\bar{g}} =& d \Gamma \wedge \bar{\mu}_{\Gamma} = \sqrt{- \vert \bar{g} \vert} d u \wedge d v \wedge d \theta \wedge d \phi .
\end{align}
Now the first term $\leftexp{I}{F^{\text{tail}}_{\a \b}}$ is 

\begin{align}
    &\frac{1}{ 2 \pi} \int_{C_p} \Big( \bgrad^{\gamma'} \Gamma \bgrad_{\gamma'} ( (V_0)^{\mu' \nu'} F_{\mu' \nu'} (x')) \Big) \bar{\mu}_\Gamma (x') \notag\\
   & =  \frac{1}{ 2 \pi} \int_{C_p} \Big( (4-\square_{\bar{g}} \Gamma) (V_0)^{\mu' \nu'}_{\a \b} F_{\mu' \nu'}(x')\Big) \bar{\mu}_{\Gamma} (x') \notag \\
   &\quad - \frac{1}{2 \pi} \int_{\sigma_p} \Big(2 \sqrt{\bar{g}_{\gamma \delta}} (V_0)^{\mu' \nu'}_{\a \b} (x, x') F_{\mu' \nu'} (x') \Big) d \theta \wedge d \phi  \\
   &=  \frac{1}{ 2 \pi} \int_{C_p} \Big( (4-\square_{\bar{g}} \Gamma) (V_0)^{\mu' \nu'}_{\a \b} F_{\mu' \nu'}(x')\Big) \bar{\mu}_{\Gamma} (x') \notag \\
   &- \frac{1}{2 \pi} \int_{\sigma_p} 
   \Big( \sqrt{g_{AB}} (V_0)^{\mu' \nu'}_{\a \b} (x, x') F_{\mu' \nu'} (x') \Big) d \theta \wedge d \phi .
\end{align}
As a consequence, we have the following expression for $\leftexp{I}{F^{\text{tail}}_{\a \b}}$: 

\begin{align}
    \leftexp{I}{F^{\text{tail}}_{\a \b}}=  \frac{1}{ 2 \pi} \int_{C_p} \Big( LU^{\mu' \nu'}_{\a \b} (x, x') F_{\a \b} (x')  \Big) \bar{\mu}_{\Gamma} (x')  - \frac{1}{ 2 \pi} \int_{\sigma_p} ( (V_0)^{\mu' \nu'} (x, x') F_{\mu' \nu'} (x)) \bar{\mu}_{\sigma_p}
\end{align}
The final expression for the tail terms, after eliminating the bulk-terms is: 

\begin{align}
    F^{\text{tail}}_{\a \b}= \frac{1}{2 \pi} \int_{C_p} \Big( LU^{\mu' \nu'}_{\a \b} (x, x') F_{\mu' \nu'} (x) \Big) \bar{\mu}_\Gamma (x').
\end{align}

Now then, the integral formula of the (gravitational) curvature tensor is 

\begin{align}
    F_{\a \b } (x) =& \frac{1}{ 2 \pi} \int_{C_p} \Big( LU^{\mu' \nu'}_{\a \b} (x, x')  F_{\mu' \nu'} (x) + U^{\mu' \nu'}_{\a \b} (x, x') N_{\mu' \nu'} (x') \Big) \bar{\mu}_{\Gamma} (x') \notag\\
    & + \frac{1}{ 2 \pi} \int_{\sigma_p} U^{\mu'\nu'}_{\a \b} \Big( 2 \bgrad t(x') \bgrad_{\gamma'} F_{\mu' \nu'} (x') + F_{\mu' \nu'} (x') \square_{\bar{g}'} t(x') \Big) \bar{\mu}_{\sigma_p} (x').
\end{align}
Let us summarize these results:
\begin{theorem}
There exists an integral formula for a solution of the Penrose wave equation, 

\begin{align}
    \square_{\bar{g}} F_{\mu \nu} + \brie^{\quad \a \b}_{\mu \nu}
    = N_{\mu \nu} (\Theta, \brie), \quad (\bar{M}, \bar{g}),
\end{align}

$\mu, \nu, \a, \b = 0, 1, 2, 3,$ such that the integral formula depends only on the initial data, the mantel of the past null cone and the intersection of the mantel of the past null cone and initial data. \end{theorem}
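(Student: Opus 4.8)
The plan is to begin from the Friedlander--Hadamard representation formula for the tensorial wave operator $L u_{\a\b} \fdg= \bgrad_\gamma \bgrad^\gamma u_{\a\b} + \brie_{\a\b}{}^{\gamma\delta} u_{\gamma\delta}$ associated to the Penrose equation, and then systematically eliminate the bulk contribution over the solid past light cone $D_p$, so that only integrals supported on the mantel $C_p$, the initial slice, and their intersection $\sigma_p$ survive. The raw formula is guaranteed to exist once the bi-tensorial parametrix is in hand: the self-adjointness of $L$ together with the reciprocity between the forward and backward fundamental solutions (Theorem 5.2.2 in \cite{Friedlander_1975}) produces the propagator $(V^+)^{\mu'\nu'}_{\mu\nu}(x,x')$ as the solution of a homogeneous characteristic initial value problem with data $V_0$ on $C_p$ obeying the Hadamard transport equation along the null generators. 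This yields the representation with a bulk term $\tfrac{1}{2\pi}\int_{D_p}(V^+)^{\mu'\nu'}_{\mu\nu}N_{\mu'\nu'}\,\bar{\mu}$, a mantel term over $C_p$, and initial-data terms over $S_p$ and $\sigma_p$.

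The decisive reduction is the divergence identity \eqref{F-div-term}. Since $L$ is formally self-adjoint and $F$ solves the inhomogeneous while $V^+$ solves the homogeneous $L$-equation, the contracted integrand collapses to a pure spacetime divergence, $(V^+)^{\mu'\nu'}_{\a\b} N_{\mu'\nu'} = \bgrad_{\gamma'}\!\left((V^+)^{\mu'\nu'}_{\a\b}\,\bgrad^{\gamma'}F_{\mu'\nu'} - (\bgrad^{\gamma'}(V^+)^{\mu'\nu'}_{\a\b})\,F_{\mu'\nu'}\right)$, and, as already emphasized, this step does \emph{not} require $V^+$ to be the solution of the characteristic problem. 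First I would therefore apply Stokes' theorem to this divergence on the solid region $D_p$, whose boundary is $\partial D_p = C_p \cup (\text{initial slice}\cap D_p)$; the four-dimensional volume integral thereby collapses to flux integrals over the mantel and over the initial slice, removing the genuine bulk term entirely.

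It remains to reorganize the surviving mantel contributions, the \emph{tail}. Following \cite{Moncrief_2005} I would split $F^{\text{tail}}_{\a\b} = \leftexp{I}{F^{\text{tail}}_{\a\b}} + \leftexp{II}{F^{\text{tail}}_{\a\b}}$, introduce the null coordinates $\xi = t'+r'$, $\eta = t'-r'$ and the Leray decomposition $\bar{\mu}_{\bar{g}} = d\Gamma \wedge \bar{\mu}_\Gamma$, and integrate the $C_p$ term by parts along the null generators. Using the transport equation for $V_0$ and the relation involving $\square_{\bar{g}}\Gamma$ coming from the Eikonal equation for the optical function $\Gamma$, the explicit generator derivatives combine so that the mantel integral reduces to an $LU^{\mu'\nu'}_{\a\b}$ term plus a boundary contribution on $\sigma_p$, which is precisely the piece that cancels the $\sigma_p$ flux produced by the initial-slice term in the previous step. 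Assembling what remains produces the asserted integral formula, every integral of which is supported on $C_p$, the initial slice, or $\sigma_p$.

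The hard part will be this last reorganization: one must verify that \emph{all} boundary contributions generated along the way---those from the Stokes reduction of the bulk and those from the integration by parts along the generators---cancel exactly against the $\sigma_p$ terms, so that no residual interior integral and no spurious boundary term survives. This hinges on the precise transport and Eikonal identities tying together $V^+$, $V_0$ and $\Gamma$, and on careful bookkeeping with the Leray form $\bar{\mu}_\Gamma$. The underlying geometric prerequisite, implicit throughout, is that the past null cone $C_p$ is a genuine characteristic hypersurface free of caustics in the region under consideration, i.e. a lower bound on the null injectivity radius, so that the null coordinates and the Leray decomposition are well defined and Stokes' theorem applies on $D_p$ without correction.
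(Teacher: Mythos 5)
Your proposal follows essentially the same route as the paper: the Friedlander--Hadamard representation with the reciprocal parametrix $V^+$ solving the characteristic initial value problem, the divergence identity \eqref{F-div-term} converting the bulk integrand over $D_p$ into a pure spacetime divergence handled by Stokes' theorem, and the Moncrief-style reorganization of the tail via the null coordinates, the Leray form $\bar{\mu}_\Gamma$, and the $(4-\square_{\bar{g}}\Gamma)$ identities, with the $\sigma_p$ boundary contributions cancelling. Your added remark on the null injectivity radius is a reasonable explicit statement of a hypothesis the paper leaves implicit, but the argument is the same.
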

\subsection{Weyl Fields and the Bel-Robinson Tensor}
As we already alluded to, the standard stress energy tensor for the Yang-Mills equations identically vanishes for the gravitational case. Therefore, we shall use the Bel-Robinson tensor instead. The Bel-Robinson tensor retains or even surpasses  several desirable properties of the stress energy tensor such as symmetry and vanishing spacetime divergence. 

The Weyl curvature tensor and the Bel-Robinson tensor are given by
\begin{align}
    \olin{W}_{\a \b \gamma \delta} =& \brie_{\a \b \gamma \gamma} + \halb (\bar{g}_{\a \delta} \bric_{\gamma \b} + \bar{g}_{\b \gamma} \bric_{\delta \a} - \bar{g}_{\a \delta} \bric_{\delta \b} - \bar{g}_{\b \delta} \bric_{\gamma \a} ) \notag\\
    & + \frac{1}{6} (\bar{g}_{\a \gamma} \bar{g}_{\delta \b} - \bar{g}_{\a \delta} \bar{g}_{\gamma \b}) \bar{R}
\end{align}
and 
\begin{align}
    \olin{Q}_{\a \b \gamma \delta} =& \brie_{\a \mu \gamma \nu} \brie^{\,\,\mu \,\, \nu}_{\b \,\, \delta} 
    + \brie_{\a \mu \delta \nu} \brie^{\,\,\mu \,\, \nu}_{\b \,\,\gamma} \notag\\
    &- \frac{1}{8} \bar{g}_{\a \b} \bar{g}_{\gamma \delta} \brie_{\mu \nu \sigma \zeta} \brie^{\mu \nu \sigma \zeta}.
\end{align}
which can be compactly represented in terms of the Weyl tensor in the form: 

\begin{align}
 \olin{Q}_{\a \b \gamma \delta} = W_{\a \sigma \gamma \zeta} W^{\,\,\sigma\,\, \zeta}_{\a \,\,\delta} 
    - \frac{3}{2} \bar{g}_{a [\b} W_{\mu \nu] \gamma \zeta} W^{\mu \nu \,\,\,\, \zeta}_{\quad \delta}.
\end{align}

The advantage of the Bel-Robinsor tensor  $\olin{Q}_{\a \b \gamma \delta}$ is that it satisfies the following properties
\begin{itemize}
\item It is a totally symmetric tensor
    \item The contraction $Q(X, X, Y, Y)$ is a non-negative quantity for the future directed timelike vector fields $X, Y.$
    \item The Bel-Robinson tensor is spacetime divergence-free i.e., $\displaystyle \bgrad^\nu \olin{Q}_{\a \b \mu \nu} \equiv 0$. This is a consequence of the Bianchi identities for $\brie$ defined in $(\bar{M}, \bar{g}).$ 
    \item Suppose $\displaystyle \leftexp{(X)}{J}^\nu \fdg= Q^{\quad \,\, \nu}_{\a \b \mu} X^\a X^\b X^\mu.$ Then, using the fact that $Q$ is divergence-free, 
    \begin{align}
        \bgrad_\nu \leftexp{(X)}{J}^\nu =  \frac{3}{2} Q_{\a \b \mu \nu} X^\a X^\b (\bgrad^\a X^\b + \bgrad^\b X^\a)
    \end{align}
    Note the occurance of the Killing form  $\mathcal{L}_X \bar{g}$ in the expression above,
    \begin{align}
        \mathcal{L}_X \bar{g} = \bgrad_\a  X_\b + \bgrad_\b X_\a. 
    \end{align}

\end{itemize}
 It may be noted that, in the nonlinear gravitational problem, energies that are conserved or bounded in time are scarce. At a first glance, in order to achieve this control, we need Killing vector fields, conformal Killing vector fields or the like, for the spacetime. However, this is requirement is too strong and will almost certainly exclude interesting cases. 

In the case of translational and equivariant $U(1)$ problem, it was shown that there exists a conserved energy that effectively corresponds to a time-like vector field, which is not a Killing vector field \cite{diss_13, AGS_15}. This fortuitous behaviour was used in a fundamental manner in the aforementioned works. However, this outcome can be attributed to the special structure provided by the equivariance assumption and it is unlikely that this result holds without this assumption. In an attempt to overcome this issue, the  general notion of quasi-local approximate Killing vector fields  was developed in \cite{Moncrief_2005, Moncrief_2014}. The framefields in the Cartan formalism, when subjected to parallel propagation, can be shown to satisfy the Killing equation approximately, with an error term that is explicitly expressible in terms of the curvature and goes to zero as we approach the tip of the cone. We hope that this concept will be helpful in our approach. 

\subsection{Quasi-local Killing and Conformal Killing operators}

Recall the covariant derivative in Cartan formalism, 

\begin{align}
\bgrad_\nu X^{\hatt{a}}_{\mu}=& - \Theta^{\hatt{a}}_{\quad \hatt{b} \nu} X^{\hatt{b}}_{\mu}  \\
\intertext{equivalently, using Christoffel symbols}
=& \ptl_\nu X^{\hatt{a}} - \olin{\Gamma}^{\lambda}_{\mu \nu} X^{\hatt{a}}_\lambda
\end{align}
As a consequence, we have the Killing form, 

\begin{align}
    K_{\mu \nu} (\bar{g}) \fdg =&  \bgrad_\nu X^{\hatt{a}}_{\mu} - \bgrad_\nu X^{\hatt{a}}_{\mu} \notag\\
    =&  - \Theta^{\hatt{a}}_{\,\,\hatt{b} \nu} X^{\hatt{b}}_{\mu} - \Theta^{\hatt{a}}_{\,\,\hatt{b} \mu}  X^{\hatt{b}}_\nu
\end{align}
In the frame field formalism, we shall use the quasi-local (conformal) Killing vector fields. The advantage of this construction is that the spacetime connection can be recovered in terms of the spacetime curvature. 

\begin{lemma} In the Cronstr\"om gauge, we have 
\begin{align}
\Theta^{\hatt{c}}_{\,\, \hatt{a} \mu} = - \int^1_0 \brie^{\hatt{c}}_{\quad \hatt{c} \mu \nu} (\lambda x) \lambda x^\nu d \lambda.
\end{align}
for the frame fields $\{ \mbo{e}_i \}$, we have 

\begin{align}
[ \mbo{e}_{\hatt{a}}, \mbo{e}_{\hatt{b}} ]^\nu =& \mbo{e}^\mu_{\hatt{a}} \bgrad_\mu \mbo{e}^\nu_{\hatt{b}} - \mbo{e}^\mu_{\hatt{b}} \bgrad_\mu \mbo{e}^\nu_{\hatt{a}}  \notag\\
= & \mbo{e}^\nu_{\hatt{c}} (\mbo{e}^\mu_{\hatt{a}} \Theta^{\hatt{c}}_{{\,\, \hatt{b} \mu}} - \mbo{e}^\mu_{{\hatt{b}}} \Theta^{\hatt{c}}_{{\,\, \hatt{a} \mu}} )
\end{align}

\end{lemma}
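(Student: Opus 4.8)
The plan is to prove the two assertions separately, since they rest on different ingredients. The first identity is the gravitational analogue of the Cronstr\"om (radial) gauge reconstruction of a Yang--Mills potential from its curvature, now with the non-compact Lorentz group $SO(3,1)$ in the role of the gauge group; the second is a direct consequence of the torsion-freeness of the Levi-Civita connection together with the defining property of the spin connection.

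For the first identity, I would start from the gauge condition $x^\mu \Theta^{\hatt{c}}_{\,\, \hatt{a} \mu} = 0$ that characterizes the Cronstr\"om gauge, and contract the Cartan curvature, written in the Yang--Mills form $F^{\hatt{c}}_{\,\, \hatt{a} \mu \nu} = \ptl_\mu \Theta^{\hatt{c}}_{\,\, \hatt{a} \nu} - \ptl_\nu \Theta^{\hatt{c}}_{\,\, \hatt{a} \mu} + \Theta^{\hatt{c}}_{\,\, \hatt{d} \mu} \Theta^{\hatt{d}}_{\,\, \hatt{a} \nu} - \Theta^{\hatt{c}}_{\,\, \hatt{d} \nu} \Theta^{\hatt{d}}_{\,\, \hatt{a} \mu}$, with $x^\mu$. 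The two quadratic terms vanish because $x^\mu \Theta^{\hatt{c}}_{\,\, \hatt{d} \mu} = 0$, and differentiating the gauge condition gives $\Theta^{\hatt{c}}_{\,\, \hatt{a} \nu} + x^\mu \ptl_\nu \Theta^{\hatt{c}}_{\,\, \hatt{a} \mu} = 0$; together these collapse the contraction to the radial relation $x^\mu F^{\hatt{c}}_{\,\, \hatt{a} \mu \nu} = \Theta^{\hatt{c}}_{\,\, \hatt{a} \nu} + x^\mu \ptl_\mu \Theta^{\hatt{c}}_{\,\, \hatt{a} \nu}$. Evaluating this at the point $\lambda x$ and recognizing the right-hand side through the chain rule as $\frac{d}{d\lambda}\big(\lambda\, \Theta^{\hatt{c}}_{\,\, \hatt{a} \nu}(\lambda x)\big)$, I would integrate in $\lambda$ over $[0,1]$; the boundary term at $\lambda=0$ drops, and the antisymmetry $F^{\hatt{c}}_{\,\, \hatt{a} \mu \nu} = -F^{\hatt{c}}_{\,\, \hatt{a} \nu \mu}$ puts the result into the stated form $\Theta^{\hatt{c}}_{\,\, \hatt{a} \mu}(x) = -\int^1_0 \brie^{\hatt{c}}_{\,\, \hatt{a} \mu \nu}(\lambda x)\, \lambda x^\nu\, d\lambda$.

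For the second identity, I would first invoke torsion-freeness of the Levi-Civita connection, which yields $[\mbo{e}_{\hatt{a}}, \mbo{e}_{\hatt{b}}] = \bgrad_{\mbo{e}_{\hatt{a}}} \mbo{e}_{\hatt{b}} - \bgrad_{\mbo{e}_{\hatt{b}}} \mbo{e}_{\hatt{a}}$; in coordinate components this is exactly the first displayed equality. I would then substitute the defining relation of the spin connection on the frame vectors, $\bgrad_\mu \mbo{e}^\nu_{\hatt{b}} = \Theta^{\hatt{c}}_{\,\, \hatt{b} \mu}\, \mbo{e}^\nu_{\hatt{c}}$, into each term and factor out the common vector $\mbo{e}^\nu_{\hatt{c}}$, producing the stated expression. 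This step is essentially bookkeeping.

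The main obstacle is the first identity, and within it the role of the gauge condition rather than the ODE integration (which is elementary once the gauge is in place). One must verify not only that $x^\mu \Theta_\mu = 0$ annihilates the quadratic term cleanly, but that such a gauge is genuinely attainable in a neighborhood of the vertex --- the gravitational counterpart of the fact, quoted earlier in the excerpt, that any Yang--Mills connection can be rotated into the Cronstr\"om gauge --- and that the resulting connection stays bounded at $x=0$ so that the boundary term $\lim_{\lambda \to 0} \lambda\, \Theta^{\hatt{c}}_{\,\, \hatt{a} \nu}(\lambda x)$ indeed vanishes. Establishing attainability and regularity along the cone, as opposed to merely formally along a single ray, is where the real content of the first statement lies.
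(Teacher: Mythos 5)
Your proof is correct. The paper itself states this lemma without any proof --- it simply imports the standard Cronstr\"om-gauge identity $A_\nu = \int_0^1 x^\mu F_{\mu\nu}(\lambda x)\,\lambda\, d\lambda$ quoted earlier in its Yang--Mills discussion and transplants it to the spin connection, and leaves the frame-field commutator identity as an immediate consequence of torsion-freeness and the definition $\grad_X \mbo{e}_{\hatt{a}} = \Theta^{\hatt{b}}_{\,\,\hatt{a}}(X)\,\mbo{e}_{\hatt{b}}$. Your derivation (contracting $F_{\mu\nu}=\ptl_\mu\Theta_\nu-\ptl_\nu\Theta_\mu+[\Theta_\mu,\Theta_\nu]$ with $x^\mu$, using $x^\mu\Theta_\mu=0$ to kill the commutator and to convert $-x^\mu\ptl_\nu\Theta_\mu$ into $\Theta_\nu$, then recognizing $\frac{d}{d\lambda}\bigl(\lambda\,\Theta_\nu(\lambda x)\bigr)$ and integrating) is exactly the argument the paper implicitly relies on, and your closing remark correctly identifies where the genuine analytic content lies, namely attainability of the gauge and regularity at the vertex, which the paper also does not address. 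Note only that you have silently corrected a typo in the statement: the integrand should read $\brie^{\hatt{c}}_{\quad\hatt{a}\mu\nu}$ rather than $\brie^{\hatt{c}}_{\quad\hatt{c}\mu\nu}$.
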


For example, this construction is applied to the optical function $\bar{f}$. The integral curves of $\bgrad \bar{f}$ are causal inside the null cone i.e., $\displaystyle \bar{g}^{\a \b} \ptl_\a \bar{f} \ptl_\a \bar{f} = 4 \bar{f}$

We have,

\begin{align}
    \bgrad^\b \bar{f} = \ptl^\b \bar{f} = 2 \bar{g}^{\a \b} \bar{g}_{\a \nu}  x^\nu = 2 x^\b 
\end{align}

Likewise, the Killing and conformal Killing forms of $\bar{f}$ can be computed as 

\begin{align}
    K_{\a \b} (\bgrad \bar{f}, \bar{g}) =& (\bgrad_\b \ptl_\a + \bgrad_\a \ptl_b) \bar{f}
    \notag\\
    =& 4 \bar{g}_{\a \b} + 2 x^\nu \ptl_\nu \bar{g}_{\a \b} \\
    CK_{\a \b} (\bgrad \bar{f}, \bar{g}) =& (\bgrad_\b \ptl_\a + \bgrad_\a \ptl_b - \halb \bar{g}_{\a \b} \bar{g}^{\mu \nu} \grad_{\nu} \ptl_\mu) \bar{f}  \notag \\
    =& 2 x^\nu \ptl_\nu \bar{g}_{\a \b} - \halb \bar{g}_{\a \b} \bar{g}^{\gamma \delta} (x^\nu \ptl_\nu \bar{g}_{\gamma \delta})
\end{align}

As a consequence, we have, 
\begin{align}
    \square \bar{f} = 8 + 2 x^\nu \frac{1}{\sqrt{-
\bar{g}}} \ptl_\nu (\sqrt{- \bar{g}}).
\end{align}

It can be established that the commutator of the $\bgrad \bar{f}$ with the framefields has the following structure: 

\begin{align}
    [ \mbo{e}_{\hatt{a}}, \bgrad \bar{f}]^\nu = 2 \mbo{e}^{\nu}_{\hatt{a}} + \mbo{e}^\mu_{\hatt{a}} \bar{g}^{\lambda \nu} x^\b \ptl_\b \bar{g}_{\lambda \nu}
\end{align}
from which it can be seen that the error terms are quadratically vanishing. 

\section{Einstein Equations on Spacetimes with Spacelike U(1) Symmetry}
Consider a $3+1$ dimensional Lorentzian spacetime $(\bar{M}, \bar{g})$ such that it admits a spacelike isometry $X \fdg = \ptl_{x^3}$ such that the metric $\bar{g}$ admits a decomposition of the form: 

\begin{align}
\bar{g} = \tilde{g} + e^{2 \gamma} (dx^2 + A_\mu dx^\mu), \quad \mu = 0, 1, 2. 
\end{align}

If we consider a further conformal transformation for the metric $\tilde{g}$ in the reduced orbit space, 

\begin{align}
g = e^{2 \gamma} \tilde{g}, \quad  \text{in the reduced space} \quad  (M)
\end{align}

In the reduced orbit space manifold, the vacuum Einstein equations for general relativity,  in Lagrangian form 

\begin{align}
\olin{R}_{\mu \nu} = 0, \quad (\bar{M}, \bar{g})
\end{align}

can be reduced to the Einstein-wave map system in $(M, g)$

\begin{subequations}\label{reduced-einstein-wavemap}
\begin{align}
\leftexp{(g)}{\text{Ein}}_{\mu \nu} =& T_{\mu \nu} \label{reduced-einstein}\\
\square_g \gamma  + \halb e^{-4 \gamma} g^{\a \b} \ptl_a \omega \ptl_b \omega  = 0&  \label{wavemap-gamma}\\
\square_g \omega - 4 e^{-4 \gamma} g^{\a \b} \ptl_a \gamma \ptl_b \omega = 0 \label{wavemap-omega}&  
\end{align}
\end{subequations}

where $\square_g $ is the covariant wave operator in the orbit space $(M, g).$ $E$ and $T$ are the Einstein tensor and the energy-momentum tensor of the wave map $$U \fdg (M, g) \to (N, h), \, (N, h)\,\,  \text{is the hyperbolic 2-plane} $$
with  $ h = 4 d\rho^2 + e^{-4 \rho} d\theta^2 $, respectively, 

\begin{align}
\leftexp{(g)}{\text{Ein}}_{\mu \nu} =& \leftexp{(g)}{\text{Ric}}_{\mu \nu} - \halb g_{\mu \nu} R_g  \notag\\
T_{\mu \nu} =& \ip{\ptl_\mu U}{\ptl_\nu U}_h - \halb g_{\mu \nu} \ip{\ptl^\sigma U}{\ptl_\sigma U}_h
\end{align}

The dimensional reduction process is based on the projection of the Ricci tensor, as shown below.

\begin{subequations}
\begin{align}
\leftexp{(\bar{g})}{\text{Ric}}_{\mu \nu} =& \leftexp{(\tilde{g})}{\text{Ric}}_{\mu \nu} - \ptl_\mu \gamma \ptl_\nu \gamma - \leftexp{(g)}{\grad}_\mu   \leftexp{(g)}{\grad}_\nu \gamma - \halb 
e^{2 \gamma} \mathcal{F}_{\mu \sigma} \mathcal{F} ^{\sigma }_{\,\,\,\,\nu} \label{ricci-pure}\\
\leftexp{(\bar{g})}{\text{Ric}}_{\mu 3}=& - \halb e^{-\gamma} \leftexp{(g)}{\grad}_\sigma (e^{3 \gamma} \mathcal{F}_{\mu}^{\,\,\,\,\sigma}) \label{ricci-cross} \\
\leftexp{(\bar{g})} {\text{Ric}}_{33}=& - e^{2 \gamma} ( g^{\mu \nu}   \leftexp{(g)}{\grad}_\mu   \leftexp{(g)}{\grad}_\nu \gamma + g^{\mu \nu} \ptl_\mu \gamma  \ptl_\nu - \frac{1}{4} \mathcal{F}_{\mu \nu} \mathcal{F}^{\mu \nu}) \label{ricci-ext}
\end{align}
\end{subequations}
where, the Faraday tensor $\mathcal{F}$ is given by $\mathcal{F} = d\mathcal{A}$ i.e., $F_{\mu \nu} = \ptl_\mu \mathcal{A}_\nu - \ptl_\nu \mathcal{A}_\nu$ in the abstract index notation. The vacuum Einstein equations provide us with the equation, from \eqref{ricci-cross} above,  

\begin{align}
0=\leftexp{(\bar{g})}{\text{Ric}}_{\mu 3}=& - \halb e^{-\gamma} \leftexp{(g)}{\grad}_\sigma (e^{3 \gamma} \mathcal{F}_{\mu}^{\,\,\,\,\sigma}), \quad (\bar{M}, \bar{g})
\end{align}
which can be compactly represented as $dG =0$ where
$G \fdg = e^{3\gamma} \leftexp{*}{F}$ and by Poincare Lemma
which allows us to introduce a potential function $\omega$ such that 
\begin{align}
d \omega = G, \quad (M, g)
\end{align}

Subsequently, we shall form a conformal transformation and reduce the vacuum Einstein equations to the dimensionally reduced Einstein -wave map system. 

Actually, in general, depending on the geometry and topology of the manifold $\Sigma$ (or the Lorentzian spacetime $M$) we have a general form of the `twist' potential formalism where $G = d \omega + H,$ in which case, the wave map equation generalizes to 

\begin{align}
\square_g \omega - 4 e^{-4 \gamma} g^{\a \b} \ptl_a \gamma \ptl_b \omega + \leftexp{(g)}{\grad} _\mu ( e^{3 \gamma} H^\mu) =0, \quad (M, g)
\end{align}

 However, we shall continue and perform the dimensional reduction of Riemannian and Weyl tensors in this work. 

In the context of the initial value problem, it is relevant to reformulate the above mentioned 
geometric structures in terms of the $3+1$ and $2+1$ decompositions (i.e., the ADM decomposition). In the ADM version of the things, we have the decomposition $\olin{\Sigma} \fdg = \Sigma \times \mathbb{R}$
\begin{align}\label{ADM-red}
\bar{g} = e^{-2\gamma} \left( -N^2 dt^2 + q_{ab} (dx^a + N^a) \otimes (dx^b + N^b) \right) + e^{2 \gamma} \left(dx^2 + \mathcal{A}_0 dt + \mathcal{A}_a dx^2 \right)
\end{align}

Starting from the ADM action functional, in terms of the ADM phase space $X_{\text{ADM}} = \{ { q_{ij}, \pi^{ij}}, i, j = 1, 2, 3 \}$
\begin{align}
 \olin{S}_{\text{ADM}} \fdg = \int ( \pi^{ab} \ptl_t q_{ab} - N H - N^i  H_i )d^4 x	
\end{align}
where the constraints $H, H_i$ are given by

\begin{align}
H=& \bar{\mu}^{-1}_{\bar{q}} ( \Vert \pi \Vert^2_{\bar{q}} - \halb \text{tr} (\pi)^2) - \bar{\mu}_{\bar{q}} \bar{R}_{\bar{q}} \\
H_i=&  -2 \leftexp{(\bar{q})}{\grad}_i \pi^j_i
\end{align}
for the ADM $3+1$ decomposition of the metric $\bar{g}:$
\begin{align}
\bar{g} = -\bar{N}^2 dt^2 + \bar{q}_{ij} (dx^i + N^i dt  )\otimes (dx^j + N^j dt). 
\end{align}
Now then, in the ADM dimensionally reduced decomposition of the metric \eqref{ADM-red}
the Hamiltonian and Momentum constraints can be expressed as 

\begin{subequations}
	\begin{align}
H  \fdg =& \bar{\mu}^{-1}_q \Big ( \Vert \pi \Vert^2_q - \textnormal{tr}(\pi)^2) + \frac{1}{8} p_{\gamma}^2 + \frac{1}{2} e^{-4 \gamma} q_{ab} \mathcal{E}^a \mathcal{E}^b \Big)  \\
& + \bar{\mu}_q \Big(-R_q + 2 q^{ab} \ptl_a \gamma \ptl_b \gamma 
+ \frac{1}{4} q^{ab} q^{bd} \ptl_{[b} \mathcal{A}_{a]} \ptl_{[d} \mathcal{A}_{c]} \Big)  \notag\\
=& \bar{\mu}^{-1}_q \Big ( \Vert \pi \Vert^2_q - \textnormal{tr}(\pi)^2) + \frac{1}{8} p_{\gamma}^2 + \frac{1}{2} e^{-4 \gamma} q_{ab} \mathcal{E}^a \mathcal{E}^b \Big)  \notag\\
& + \bar{\mu}_q \Big(-R_q + 2 q^{ab} \ptl_a \gamma \ptl_b \gamma 
+ \frac{1}{4} q^{ab} q^{bd} \mathcal{F}_{ba} \mathcal{F}_{dc} \Big)  \\
H_a=& -2 \leftexp{(q)}{\grad}_b \mbo{\pi}^b_a + p \ptl_a \log \vert  \Phi \vert + \mathcal{E}^b ( \ptl_{[a} \mathcal{A}_{b]}) \\
=& -2 \leftexp{(q)}{\grad}_b \mbo{\pi}^b_a + p_\gamma \ptl_a \gamma + \mathcal{E}^b \mathcal{F}_{ba}
\end{align}
\end{subequations}

For the dimensionally reduced phase space 

\begin{align}
X^{\text{KK}} \fdg = \Big\{ (q_{ab}, \pi^{ab} ), (\gamma, p_\gamma), (\mathcal{A}_a, \mathcal{E}^a) \Big\}.
\end{align}

An important aspect of the dimensional reduction process, is that we can further transform the $\{ ( \mathcal{A}_a, \mathcal{E}^a) \}$ canonical pairs into a `twist' potential formalism. 
Consequently, we have

\begin{subequations}
\begin{align}
H =&   \bar{\mu}^{-1}_q \Big ( \Vert \pi \Vert^2_q - \textnormal{tr}(\pi)^2) + \frac{1}{8} p_{\gamma}^2 + \halb e^{2 \gamma} p^2_{\omega} \Big)  \notag\\
& + \bar{\mu}_q \Big(-R_q + 2 q^{ab} \ptl_a \gamma \ptl_b \gamma 
+ \halb e^{-4 \gamma} q^{ab} \ptl_a \omega \ptl_b \omega \Big)\\
H_a =&  -2 \leftexp{(q)}{\grad}_b \mbo{\pi}^b_a + p_\gamma \ptl_a \gamma + p_{\omega} \ptl_a \omega  
\end{align}
\end{subequations}

for the reduced Einstein-wave map phase space 

\begin{align}
X^{\text{WM}} \fdg = \Big\{ ( q_{ab}, \pi^{ab}), ( p_\gamma, \gamma), ( p_\omega, \omega) \Big\}
\end{align}

In this dimensional reduction framework, the Einstein equations can be represented as 
\begin{subequations}
\begin{align}
\ptl_t \gamma =& \frac{1}{4} N \bar{\mu}^{-1}_q p_\gamma + \mathcal{L}_N \gamma, \\
\ptl_t p_\gamma =& 4 \ptl_b \left( N \bar{\mu}_q q^{ab} \ptl_a \gamma  \right) -2 N \bar{\mu}^{-1}_q e^{4 \gamma} p^2_\omega
\notag\\
& \quad + 2N \bar{\mu}_q e^{-4 \gamma} q^{ab} \ptl_a \omega \ptl_b \omega + \mathcal{L}_N p_\gamma  \\
\ptl_t \omega=& N \bar{\mu}^{-1}_q e^{4 \gamma} p_\omega + \mathcal{L}_N \omega \\
\ptl_t p_\omega=& \ptl_b (N \bar{\mu}_q e^{-4 \gamma} q^{ab} \ptl_a \omega) + \mathcal{L}_N p_{\omega} \\
\ptl_t q_{ab} =& 2 N \bar{\mu}^{-1}_q (q_{ac} q_{bd} - q_{ab} q_{cd}) \pi^{cd} + (\mathcal{L}_N q)_{ab} \\
\ptl_t \pi^{ab} =& -2N \bar{\mu}^{-1}_q ( \pi^{ac} \pi^{bd} - q_{ab} q_{cd}) \pi^{cd} + \halb N \bar{\mu}^{-1}_q q^{ab} \left(\frac{1}{8} p^2_\gamma + \halb e^{4 \gamma} p^2_\omega \right) \notag\\
& \halb N \bar{\mu}^{-1}_q q^{ab} \left( \pi^{cd} \pi_{cd} - (\pi^c_c)^2 \right) + \bar{\mu}_q (\leftexp{(q)}{\grad}^b \leftexp{(q)}{\grad}^a N - q^{ab} \leftexp{(q)}{\grad}_b \leftexp{(q)}{\grad}^b N ) \notag\\
& N \bar{\mu}_q \left(q^{ac} q^{bd} - \halb q^{ab} q^{cd}  \right) \left( 2 \ptl_c \gamma \ptl_d \gamma + \halb e^{-4 \gamma}
\ptl_c \omega \ptl_d \omega \right)
\end{align}
\end{subequations}

These evolution equations provide the propagation equations for the constraints

\begin{subequations} \label{constriant-propagation}
\begin{align}
\ptl_t H =& \ptl_a (N^a H) + \ptl_b N q^{ab} \ptl_a H + \ptl_b (N q^{ab} H_a)  \\
\ptl_t H_a =& \ptl_b (N^b H_a ) + \ptl_a N^b H_b + H \ptl_a N 
\end{align}
\end{subequations}

The constraints and their evolution is relevant because they shall play a role in the evolution of the Weyl tensor fields in the spacetime. In particular, the Weyl components are represented as

\begin{align}
\mathcal{E}^{ij} =& \Big ( \bar{\mu}_{\bar{q}} \leftexp{(\bar{q})}{\text{Ric}}^{ij} - \bar{\mu}^{-1}_{\bar{q}}  \Big( \bar{\pi}^i_{\,\,\,\ell} \bar{\pi}^{\ell j} - \halb \bar{\pi}^{ij} \bar{\pi}^{\ell}_{\,\,\,\, \ell}   \Big) \Big) \notag\\
\mathcal{B}^{ij} =& \eps^{m \ell j} \bar{\mu}^{-1}_{\bar{q}} \Big(  \leftexp{(\bar{q})}{\grad}_\ell  \bar{\pi}^{m}_{\,\,\,\, i} -\halb \delta^i_{m} \leftexp{(\bar{q})}{\grad}_\ell \text{tr}(\bar{\pi})   \Big)
\end{align}
In particular, the constraint evolution expressions \eqref{constriant-propagation} can be used to compute the evolution equations of the $ \mathcal{E}^{ij} $ and $\mathcal{B}^{ij}$ fields, analogous to the ones in \cite{Andersson_Moncrief_2004}, but specialized to our $U(1)$ symmetric Einsteinian spacetimes. 
One of the aspects that we shall focus on in this work is whether we can transform the $\mathcal{F}$ tensor related to the `vector potential' $\mathcal{A}$ for all the components of the Riemann tensor. This is a fundamental aspect of the dimensional reduction process and has implications for the non-polarised case. 

\subsection{Coordinate Conditions}
The Einstein equations are diffeomorphism invariant and the resultant gauge freedom is manifested in terms of the choice of the lapse and shift of our metric. In this work, we shall use the constant mean curvature spatial harmonic gauge. In general, the gauge conditions of the Einstein equations, under the Cartan formalism, are reflected in the gauge conditions of the corresponding (gravitational) Yang-Mills theory, whose gauge group is the non-compact Lorentz group $SO(3,1).$

In our dmensionally reduced framework, if we make a special gauge choice on the metric $q$ such that 

\begin{align}
q = e^{2 \mbo{\nu}} h, \quad h \quad \text{is a flat metric}
\end{align}

the Hamiltonian and  momentum constraint equations can be simplified as 

\begin{align}
0=H=&\bar{\mu}^{-1}_h e^{-2 \mbo{\nu}} \left( \varpi^b_a \varpi^a_b + \frac{1}{8} p^2_\gamma + \halb e^{4 \gamma} p^2_\omega \right) - \halb \bar{\mu}_h e^{2 \mbo{\nu}} \tau^2 \notag  \\
&+2 \ptl_a (\bar{\mu}_h h^{ab} \ptl_b \mbo{\nu}) -\bar{\mu}_h \left( 2-2 h^{ab} \ptl_a \gamma \ptl_b \gamma - \halb e^{-4 \gamma} h^{ab} \ptl_a \omega \ptl_b \omega  \right) \\
0 =H_a =& \leftexp{(h)}{\grad}_b \left( \bar{\mu}_h \left(  \leftexp{(h)}{\grad}^b Y_a +  \leftexp{(h)}{\grad}_a Y^b - \delta^b_a  \leftexp{(h)}{\grad}_c Y^c \right) \right) \notag \\
& + \halb \ptl_a \tau e^{2 \mbo{\nu}} \bar{\mu}_h - \halb (p_\gamma \ptl_a \gamma + p_\omega \ptl_a \omega )
\end{align}

The Hamiltonian equations provide us the following equation for the propagation of the spacetime mean curvature: 

\begin{align}
\ptl_t \tau =& - \Delta_q N + \bar{\mu}^{-1}_q  \Big( \Vert \pi \Vert^2_q + \frac{1}{8} p^2_{\gamma} + \halb e^{2 \gamma} p^2_{\omega}  \Big)
\intertext{which can also be represented as}
\ptl_t \tau =& e^{-2 \lambda } -\Delta_h N + N \Big( e^{-4 \lambda} \sqrt{h}^{-1} \big( \Vert  \varpi \Vert^2
+  \halb \tau^2e^{4 \lambda} \sqrt{h} + \frac{1}{8} p^2_{\gamma} + \halb e^{2 \gamma} p^2_{\omega} \big) \Big)
\end{align}

Likewise, the propagation of the conformally flat form of the metric is provides an equation for the shift vector field 

\begin{align}
2N \left( \pi^{ab} - \halb q^{ab} \text{tr} (\pi) \right) + \sqrt{h} \left(\text{CK} (h, N)\right) =0
\end{align}
where $\text{CK} (h, N)$ is the conformal Killing operator 

$$\text{CK} (h, N) \fdg = \leftexp{(h)}{\grad}^a N^b + \leftexp{(h)}{\grad}^b N^a - h^{ab} \leftexp{(h)}{\grad}_a N^a. $$

\section{Cartan Formalism of  $U(1)$ Symmetric Spacetimes}

\subsection{`Polarized' $U(1)$ Einstein Equations}
In the following we shall restrict our attention to the polarized case. We shall extend these results for the generalized case later. In the polarized case, the metric has the following structure, 
\begin{align}
\bar{g} = e^{-2 \gamma} g + e^{2 \gamma} (dx^3)^2
\end{align}
Let us compute the curvature components, firstly for the general form of the dimensionally reduced Lorentz metric $g$ and then for the ADM form and the conformally flat form. 

Let us first perform the dimensional reduction of the Riemann tensor for the following decomposition of the $\bar{g}$ metric tensor
\begin{align}
\bar{g} = \tilde{g} + e^{2 \gamma} (dx^3)^2
\end{align}

It may be noted that in this decomposition, we have $ \sqrt{-\bar{g}} = e^{2 \gamma} \sqrt{-\tilde{g}}.$ Now consider the orthonormal basis of $\tilde{g}:$

\begin{align}
\mbo{e}^{\hatt{a}} = \mbo{e}^{\hatt{a}}_{\,\,\,\,\mu} dx^\mu 
\intertext{and}
\mbo{e}^{\hatt{x^3}} = e^{\gamma} dx^3
\end{align}
Now then, we have 
\begin{align}
d \mbo{e}^a =& - \leftexp{(3)}{\Theta^{\hat{a}}_{\,\,\,\,\hat{b}}} \wedge \mbo{e}^{\hat{b}} \notag\\
=&- \leftexp{(4)}{\Theta}^{\hatt{a}}_{\,\,\,\, \hatt{b}} \wedge \mbo{e}^b - \leftexp{(4)}{\Theta}^{\hatt{a}}_{\,\,\,\, x^3} \wedge \mbo{e}^{\hatt{x^3}} \\
d \mbo{e}^{\hatt{x^3}} =& \ptl_a \gamma \mbo{e}^{\hatt{a}} \wedge \mbo{e}^{\hatt{x^3}} = 
\leftexp{(4)}{\Theta}^{\hatt{x^3}}_{\,\,\,\, \hatt{a}} \wedge \mbo{e}^{\hatt{a}}
\end{align}
Consequently, the Cartan connection terms are 
\begin{align}
\leftexp{(4)}{\Theta}^{\hatt{x^3}}_{\,\,\,\, \hatt{a}} = \ptl_a \gamma \mbo{e}^{\hatt{x^3}}, \quad \leftexp{(4)}{\Theta}^{\hat{a}}_{\,\,\,\, \hat{b}} = \leftexp{(3)}{\Theta}^{\hat{a}}_{\,\,\,\, \hat{b}}
\end{align}

We are now ready to represent the Riemann tensor in terms of the decomposed metric form written above

\begin{align}
\brie^{\hatt{a}}_{\,\,\,\,\hatt{b}} =&  \rie^{\hatt{a}}_{\,\,\,\,\hatt{b}} + \ptl^a \gamma \mbo{e}^{\hatt{x^3}} \wedge \ptl_b \gamma \mbo{e}^{\hatt{x^3}} \notag \\
=& d\, \leftexp{(3)}{\Theta^{\hatt{a}}_{\,\,\,\,\hatt{b}}} + \leftexp{(3)}{\Theta^{\hatt{a}}_{\,\,\,\,\hatt{b}}} \wedge \leftexp{(3)}{\Theta^{\hatt{a}}_{\,\,\,\,\hatt{b}}} + \ptl^a \gamma \mbo{e}^{\hatt{x^3}} \wedge \ptl_b \gamma \mbo{e}^{\hatt{x^3}} 
\intertext{and the cross term}
\brie^{\hatt{x^3}}_{\,\,\,\,\hatt{a}}=& d (\ptl_a \mbo{e}^{\hatt{x}^3}) + \ptl_b \gamma \mbo{e}^{x^3} \wedge \leftexp{(3)}{\Theta^b_{\,\,\,\,a}} + \halb e^{\gamma} \mbo{e}^{x^3} \wedge \leftexp{(3)}{\Theta^b_{\,\,\,\, a}}
\end{align}

Now in the abstract index notation, 

\begin{align}
\brie^{\a}_{\,\,\,\, \b \mu \nu }  =& \rie^{\a}_{\,\,\,\, \b \mu \nu} \notag\\
\brie^{x^3}_{\,\,\,\, \a x^3 \b} =& - \leftexp{(g)}{\grad}_\a \leftexp{(g)}{\grad}_\b \gamma  -\leftexp{(g)}{\grad}_\a \gamma \leftexp{(g)}{\grad}_\b \gamma  , \quad \a, \b,\mu,\nu = 0, 1, 2. 
\end{align}
Furthermore, these curvature expressions can  be computed for the ADM form of the metric $\bar{g}$: 

\begin{align}
     \bar{g}_{\mu \nu} dx^\mu \otimes db^\nu = e^{-2\gamma} ( -N^2 dt^2 + q_{ab} (dx^a + N^a dt) (dx^b + N^b dt)) + e^{2 \gamma} (dx^3)^2.
\end{align}

\subsection{$U(1)$ Symmetric Einstein Equations with a non-vanishing `vector potential'}
The general case, wthout the polarization assumption, is a general case of the $U(1)$ symmetric spacetime. An important aspect of the problem that we shall explore is the whether there are any non-local aspects of the dimensional reduction of the Riemannian and Weyl tensors. This is due to the non-vanishing of the `vector potential' term and consequently the corresponding Faraday tensor $\mathcal{F}$ is non-vanishing. 
To start with, consider the form of the $3+1$ metric $\bar{g}$ as 

\begin{align} \label{KK-metric-classis-2}
\bar{g} = \tilde{g} + e^{2 \gamma} (dx^3 + \mathcal{A}_\nu dx^\nu )^2 
\end{align}
Correspondingly, let us consider the modified  (generalized) orthonormal basis, aligning with the decomposition presented above in \eqref{KK-metric-classis-2}: 
\begin{subequations}
	\begin{align}
	\mbo{e}^a =& e^a_\mu dx^\mu \\
	\mbo{e}^{x^3} =& e^{\gamma} (dx^3 + \mathcal{A}_\nu dx^\nu) 
	\end{align}
	\end{subequations}
compute
\begin{subequations}
	\begin{align}
	\text{d} \mbo{e}^a =&  - \leftexp{(3)}{\Theta}^{\hatt{a}}_{\,\,\,\, \hatt{b} } \wedge \mbo{e}^b
	= - \Theta^a_{\,\,\,\, b} - \Theta^a_{3} \wedge \mbo{e} ^3 \\
	\text{d} \mbo{e}^{x^3} =& \ptl_a \gamma \mbo{e}^a \wedge \mbo{e}^ {3} + e^{\gamma} dA 
	= \ptl_a \gamma \mbo{e}^{\hatt{a}} \wedge \mbo{e}^ {3} + e^{\gamma} \mathcal{F} = \ptl_a \gamma \mbo{e}^a \wedge \mbo{e}^ {3} + \halb e^{\gamma} \mathcal{F}_{ab} \mbo{e}^a \wedge \mbo{e}^b \notag\\
	=& - \Theta ^{3}_{\,\,\,\, a} \wedge \omega^a  
	\end{align}
	\end{subequations}

Now then, using the Riemann curvature in Cartan formalism, is a matrix of two-forms, can now be expressed as 

\begin{align}
&\brie^{\hatt{a}}_{\,\,\,\, \hatt{b}} = \text{d} \leftexp{(3)}{\Theta}^{\hatt{a}}_{\,\,\,\,\hatt{b}} + \halb \text{d} (e^{\gamma} \mathcal{F}^{\hatt{a}}_{\,\,\,\,\hatt{a}} e^3) + \Big(  \leftexp{(3)}{\Theta} ^{\hatt{a}}_{\,\,\,\,\hatt{c}} + \halb e^{\gamma} \mathcal{F}^{\hatt{a}}_{\,\,\,\, \hatt{c}} \mbo{e}^3 \Big) \wedge  \Big(  \leftexp{(3)}{\Theta} ^{\hatt{a}}_{\,\,\,\,\hatt{b}} + \halb e^{\gamma} \mathcal{F}^{\hatt{a}}_{\,\,\,\, \hatt{b}} \mbo{e}^3 \Big) \notag \\
&+ \Big( \ptl^a \gamma \mbo{e}^{3} + \halb e ^{\gamma} \mathcal{F}^{\hatt{a}}_{\,\,\,\, \hatt{c}} \, \mbo{e}^{\hatt{c}} \Big) \wedge \Big(  \ptl_b \gamma \mbo{e}^{3} + \halb e ^{\gamma} \mathcal{F}_{\hatt{b} \hatt{d}} \, \mbo{e}^{\hatt{a}} \Big) 
\end{align}

This expression can further be simplified as 

\begin{align}
&\text{d} \leftexp{(3)}{\Theta}^a_{\,\,\,\, b} + \halb e^{\sigma } \ptl_c \sigma \mathcal{F}^{\hatt{a}}_{\,\,\,\, \hatt{b}} \mbo{e}^{\hatt{c}} \wedge \mbo{e}^{\hatt{3}} + \halb e^{\gamma} \ptl_c \mathcal{F}^{\hatt{a}}_{\,\,\,\, \hatt{b}} \mbo{e}^{\hatt{c}} \wedge \mbo{e}^{\hatt{3}}  + \halb e^{\gamma} \mathcal{F}^a_{b} ( \ptl_b \gamma \mbo{e}^{\hatt{c}} 
\wedge \mbo{e}^{\hatt{c}} + e^\gamma \mathcal{F}) \notag\\
&+ \leftexp{(3)}{\Theta}^{\hatt{a}}_{\,\,\,\, \hatt{c}} \wedge \leftexp{(3)}{\Theta}^{\hatt{c}}_{\,\,\,\, \hatt{b}} + \halb e^{\gamma} (\mathcal{F}^{\hatt{a}}_{\,\,\,\, \hatt{c}} \mbo{e}^3 \wedge \ptl_b \gamma \mbo{e}^{\hatt{3}} + 
\leftexp{(3)}{\Theta}^{\hatt{a}}_{\,\,\,\, \hatt{c}} \wedge \mathcal{F}^{\hatt{c}}_{\,\,\,\, b} \mbo{e}^{\hatt{3}}) + \frac{1}{4} e^{2 \gamma} \mathcal{F}^{\hatt{a}}_{\,\,\,\, \hatt{c}} \mathcal{F}_{ \hatt{b} \hatt{d}} \mbo{e}^{\hatt{c}} \wedge \mbo{e}^{\hatt{d}} \notag \\
&+  \halb e^{\gamma} \Big( \mathcal{F}^{\hatt{a}}_{\,\,\,\, \hatt{c}} \mbo{e}^{\hatt{a}} \wedge \ptl_b \gamma \mbo{e}^{\hatt{3}} + \mathcal{F}_{\hatt{b} \hatt{d}} \ptl^a \gamma \mbo{e}^{\hatt{3}}  \wedge \mbo{e}^{{\hatt{d}}}\Big)
\end{align}
This can further be decomposed as 
\begin{align}
\brie^a_{\,\,\,\, bcd} = \rie^a_{\,\,\,\, bcd} + e^{2 \gamma} \Big( \mathcal{F}^{\hatt{a}}_{\,\,\,\, \hatt{c} } \mathcal{F}_{\hatt{b} \hatt{d}} - \mathcal{F}^{\hatt{a}}_{\,\,\,\, \hatt{d} } \mathcal{F}_{\hatt{b} \hatt{c}} + 2 \mathcal{F}^{\hatt{a}}_{\,\,\,\, \hatt{b} } \mathcal{F}_{\hatt{c} \hatt{d}}   \Big) 
\end{align}
Likewise, we have, the cross term: 

\begin{align}
\brie^{\hatt{x^3}}_{\,\,\,\, \hatt{a}} = \text{d} \Big( \ptl_a \gamma \mbo{e} ^{\hatt{x^3}} + \halb e^{\gamma}\mathcal{F} _{\hatt{a} \hatt{b}} \mbo{e}^{\hatt{b}} \Big) + \Big( \ptl_b \gamma \mbo{e}^{\hatt{3}} + \halb e^{\gamma} \mathcal{F}_{\hatt{b} \hatt{c}} \mbo{e}^{\hatt{c}} \Big) \wedge  \Big( \leftexp{(3)}{\Theta}^b_{\,\,\,\, \hatt{a}}+ \halb e^{\gamma} \mathcal{F}_{\hatt{b} \hatt{c}} \mbo{e}^{\hatt{c}} \Big) 
\end{align}
which can be simplified as 
\begin{align}
&\ptl^2_{ab} \gamma \mbo{e}^{\hatt{b}} \wedge \mbo{e}^{\hatt{x^3}} + \ptl_a \gamma \Big( \ptl_c \gamma \mbo{e}^{\hatt{c}} \wedge \mbo{e}^{\hatt{x^3}} + e^{\gamma} \mathcal{F} \Big)  \notag \\
&+ \halb e^{\gamma} \Big( \ptl_c \gamma \mbo{e}^{\hatt{c}} \wedge \mbo{e}^{\hatt{b}} \mathcal{F}_{\hatt{a} \hatt{b}} +   \ptl_a \mathcal{F}_{\hatt{a} \hatt{b}} \mbo{e}^{\hatt{c}} \wedge 
\mbo{e}^{\hatt{b}} - \halb \mathcal{F}_{\hatt{a} \hatt{b}} \mbo{e}^{\hatt{b}} \wedge \mbo{e}^{\hatt{c}}  \Big ) + \ptl_b \gamma \mbo{e}^{\hatt{x^3}} \wedge \leftexp{(3)}{ \Theta} ^{\hatt{b}}_{\,\,\,\, \hatt{a}} \notag \\
&+  \halb e^{\gamma} \mathcal{F}_{\hatt{b} \hatt{c}} \mbo{e}^{\hatt{c}} \wedge \leftexp{(3)}{\Theta}^{b}_{\,\,\,\, \hatt{a}} + \frac{1}{4} e^{2 \gamma} \mathcal{F}^{\hatt{b}}_{\,\,\,\, \hatt{a}} \mathcal{F}_{\hatt{b} \hatt{c}} \mbo{e}^{\hatt{c}} \wedge 
\mbo{e}^{\hatt{x^3}}
\end{align}
It now follows that we can extract the Ricci curvature (as shown in Section 2)  from

\begin{align}
\brie^{x^3}_{\,\,\,\, a x^3 b} = - \ptl^2 _{ab} \gamma - \ptl_a \gamma \ptl_a \gamma + \ptl_c 
\Gamma^c_{ba} - \frac{1}{4} e^{2 \gamma} \mathcal{F}_{ab} \mathcal{F}^c_{\,\,\,\, b}
\end{align}

and the scalar curvature: 
\begin{align}
\text{Scal} (\bar{g}) = \text{Scal} (\tilde{g}) +  \frac{1}{4} e^{2 \gamma} F^2 - 2 \square_g \sigma 
-2 ( \leftexp{(g)}{\grad} \gamma )^2 . 
\end{align}
The Riemann curvature in abstract index notation is now straightforward: 
\begin{align}
\brie^{\a}_{\,\,\,\, \b \mu \nu }  =& \rie^{\a}_{\,\,\,\, \b \mu \nu} + \frac{1}{4} ( e^{2 \gamma}) ( \mathcal{F}^{\a}_{\,\, \mu} \mathcal{F}_{\b \nu} - \mathcal{F}^{\a}_{\,\, \nu}   \mathcal{F}_{\b \mu} + 2 \mathcal{F}^{\a}_{\,\, \b} \mathcal{F}_{\mu \a} ) \notag\\
\brie^{x^3}_{\,\,\,\, \a x^3 \b} =& - \leftexp{(g)}{\grad}_\a \leftexp{(g)}{\grad}_\b \gamma  -\leftexp{(g)}{\grad}_\a \gamma \leftexp{(g)}{\grad}_\b \gamma - \frac{1}{4} e^{2 \gamma} \mathcal{F}_{\a \mu} \mathcal{F}^{\,\,\mu}_{\b}  , \quad \a, \b,\mu,\nu = 0, 1, 2. 
\end{align}
Analogously, we can compute the expressions for the ADM form of the metric $\bar{g}:$
\begin{align}
     \bar{g}_{\mu \nu} dx^\mu \otimes db^\nu =& e^{-2\gamma} ( -N^2 dt^2 + q_{ab} (dx^a + N^a dt) (dx^b + N^b dt)) \\
     &+ e^{2 \gamma} (dx^3 + \mathcal{A}_0 dt + \mathcal{A}_a dx^a)^2.
\end{align}
We have established: 
\begin{lemma}
Suppose $(\bar{M}, \bar{g})$ is a $U(1)$ symmetric lorentzian spacetime and expressible in the form 
\begin{align}
    \bar{g} = e^{-2 \gamma} g + e^{2 \gamma} (dx^3 + \mathcal{A}_\a dx^\a)^2
\end{align}
in a suitably aligned coordinate system, where $\gamma, g$ and $\mathcal{A}$ are independent of the coordinate $x^3$ corresponding to the $U(1)$ symmetry, then the Riemann curvature tensor $\brie$ and its corresponding  (gravitational) Yang-Mills curvature $F$ can be expressed independently from the undifferentiated (gravitational) `vector potential' $\mathcal{A}.$ 
\end{lemma}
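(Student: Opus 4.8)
The plan is to show that the only place the potential $\mathcal{A}$ can enter the geometry is through the single adapted coframe one-form $\mbo{e}^{x^3} = e^{\gamma}(dx^3 + \mathcal{A}_\nu dx^\nu)$, and that upon building the connection and curvature via the Cartan structural equations the undifferentiated $\mathcal{A}$ is always either absorbed back into $\mbo{e}^{x^3}$ itself or differentiated into the Faraday tensor $\mathcal{F} = d\mathcal{A}$. Concretely, I would first record the adapted orthonormal coframe $\{\mbo{e}^a, \mbo{e}^{x^3}\}$, in which the base coframe $\mbo{e}^a = e^a_\mu dx^\mu$ is that of $g$ and is manifestly $\mathcal{A}$-free, so that $\mathcal{A}$ sits entirely inside $\mbo{e}^{x^3}$.

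The central computation is the exterior derivative of the fibre one-form. Because $\gamma$ and $\mathcal{A}$ are $x^3$-independent, one finds
$$d\mbo{e}^{x^3} = e^{\gamma} d\gamma \wedge (dx^3 + \mathcal{A}_\nu dx^\nu) + e^{\gamma}\, d\mathcal{A} = d\gamma \wedge \mbo{e}^{x^3} + e^{\gamma}\mathcal{F},$$
so the $\mathcal{A}$-dependent piece $e^{\gamma} d\gamma \wedge \mathcal{A}_\nu dx^\nu$ recombines with $e^{\gamma} d\gamma \wedge dx^3$ into a term proportional to the basis one-form $\mbo{e}^{x^3}$, while the remaining potential dependence enters only through $\mathcal{F}$. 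Thus when $d\mbo{e}^{\hatt{a}}$ is expanded in the coframe basis, the resulting structure functions $C^{\hatt{a}}_{\hatt{b}\hatt{c}}$ contain $\mathcal{A}$ only via the components of $\mathcal{F}$ (and via $\ptl\gamma$ and the structure functions of $g$). I would then feed these $\mathcal{A}$-free structure functions into Cartan's first structural equation $d\mbo{e}^{\hatt{a}} = -\Theta^{\hatt{a}}_{\,\,\hatt{c}}\wedge\mbo{e}^{\hatt{c}}$ together with the antisymmetry \eqref{spin-sum-identity}; since the spin connection is algebraically determined from the structure functions and the constant frame metric, the frame connection coefficients $\Theta^{\hatt{a}}_{\,\,\hatt{b}\hatt{c}}$ are $\mathcal{A}$-free as well. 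Cartan's second structural equation $\brie^{\,\hatt{a}}_{\,\,\,\,\hatt{b}} = d\Theta^{\hatt{a}}_{\,\,\hatt{b}} + \Theta^{\hatt{a}}_{\,\,\hatt{c}}\wedge\Theta^{\hatt{c}}_{\,\,\hatt{b}}$ then produces the curvature two-form from frame derivatives and wedge products of these coefficients, which preserves $\mathcal{A}$-independence; reading off the frame components recovers exactly the explicit expressions for $\brie^{\a}_{\,\,\,\,\b\mu\nu}$ and $\brie^{x^3}_{\,\,\,\,\a x^3\b}$ displayed above, which involve only $\rie(g)$, $\gamma$, and $\mathcal{F}$.

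The subtle point — and the step I expect to be the main obstacle to state cleanly — is the frame-versus-coordinate distinction together with the role of the dual frame. First, the frame vectors $\mbo{e}_{\hatt{a}}$ dual to the coframe do carry $\mathcal{A}$ in their $\ptl_{x^3}$ components, but they act on $x^3$-independent quantities, so that component annihilates everything in sight and the frame derivatives appearing in Cartan II remain $\mathcal{A}$-free; this must be checked rather than merely asserted. Second, and more importantly, in a pure coordinate basis the components $\brie^{\a}_{\,\,\,\,\b\mu\nu}$ carrying a fibre index do reintroduce factors $e^{\gamma}\mathcal{A}_\mu$ through the coframe coefficients $\mbo{e}^{x^3}_\mu$, so the assertion is genuinely a statement about the Cartan (orthonormal-frame) components, i.e.\ about the gravitational Yang-Mills curvature $F^{\hatt{a}}_{\,\,\hatt{b}\mu\nu}$, and must be read in that formalism. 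The structural reason this is forced, which I would include as a remark, is gauge invariance: the fibre reparametrization $x^3 \mapsto x^3 - \lambda(x^0,x^1,x^2)$ is a diffeomorphism preserving the form of $\bar{g}$ and sending $\mathcal{A} \mapsto \mathcal{A} + d\lambda$ while leaving $\mbo{e}^{x^3}$, hence the whole adapted coframe and all frame components of $\brie$, invariant; since $\lambda$ is arbitrary the pointwise value of $\mathcal{A}$ is pure gauge, so no frame component of the curvature can depend on it, only on the gauge-invariant $\mathcal{F} = d\mathcal{A}$.
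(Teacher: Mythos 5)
Your proposal is correct and follows essentially the same route as the paper: the adapted orthonormal coframe with $\mbo{e}^{x^3}=e^{\gamma}(dx^3+\mathcal{A}_\nu dx^\nu)$, the key computation $d\mbo{e}^{x^3}=d\gamma\wedge\mbo{e}^{x^3}+e^{\gamma}\mathcal{F}$ showing the undifferentiated potential is absorbed into the coframe or differentiated into $\mathcal{F}$, and then Cartan's first and second structural equations to read off connection and curvature in terms of $\rie(g)$, $\ptl\gamma$, and $\mathcal{F}$ only. Your closing gauge-invariance remark ($x^3\mapsto x^3-\lambda$, $\mathcal{A}\mapsto\mathcal{A}+d\lambda$) is a sound conceptual supplement not spelled out in the paper's argument, but the proof itself matches.
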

An analogous result can also be obtained for $U(1)$ symmetric spacetimes satisfying the Einstein-Maxwell equations. 
\subsection{Relevant Examples}
In the following, we shall represent some geometric objects of relevant spacetimes in Cartan formalism. Let us start with the  the Kasner solution. This is perhaps the simplest non-trivial example of a translational $U(1)$ symmetric spacetime. 

\begin{subequations}
\begin{align}
    \bar{g}_{\mu \nu} dx^\mu \otimes d x^\nu =& -dt^2 + t^{2p_1} (dx^1)^2 + t^{2p_2} (dx^2)^2 + t^{2p_3} (dx^3)^2  
    \intertext{where the parameters $p_1, p_2, p_3$ are constrained by}
    p_1+ p_2+p_3=&1 \\
    p^2_1 + p^2_2 + p^2_3=&1.
\end{align}
\end{subequations}

Consider the orthonormal basis: 

\begin{align}
    \mbo{e}^{\hatt{t}} = dt, \quad \mbo{e}^{\hatt{x^1}} = t^{p_1} dx^1, \mbo{e}^{\hatt{x^2}} = t^{p_2} dx^2, \mbo{e}^{\hatt{x^3}} = t^{p_3} dx^3.
\end{align}

Then, 

\begin{align}
    d \mbo{e}^{\hatt{t}}=0, d \mbo{e}^{\hatt{x^1}} = p_1 t^{p_1-1} dt \wedge dx^1= - p_1 t^{-1} \mbo{e}^{\hatt{x^1}} \wedge \mbo{e}^{\hatt{t}}
    \intertext{analogously}
    d \mbo{e}^{\hatt{x^2}} = p_2 t^{p_2-1} dt \wedge dx^2= - p_2 t^{-1} \mbo{e}^{\hatt{x^2}} \wedge \mbo{e}^{\hatt{t}}
    \intertext{and}
     d \mbo{e}^{\hatt{x^3}} = p_3 t^{p_3-1} dt \wedge dx^3= - p_3 t^{-1} \mbo{e}^{\hatt{x^3}} \wedge \mbo{e}^{\hatt{t}}.
\end{align}
Subsequently, we can compute the Riemann curvature 2-forms using the Cartan structural equation: 
\begin{align}
    \brie^{\hatt{a}}_{\,\, \hatt{b}} = d \Theta^{\hatt{a}}_{\,\, \hatt{b}} +  \Theta^{\hatt{a}}_{\,\, \hatt{c}} \wedge \Theta^{\hatt{c}}_{\,\, \hatt{b}}
\end{align}
as follows: $\brie^{\hatt{t}}_{\,\, \hatt{x^i}} =0, i=1,2, 3.$
\begin{subequations}
\begin{align}
    \brie^{\hatt{x^1}}_{\,\, \hatt{x^2}} =& d \Theta^{\hatt{x^1}}_{\,\, \hatt{x^2}} +  \Theta^{\hatt{x^1}}_{\,\, \hatt{c}} \wedge \Theta^{\hatt{c}}_{\,\, \hatt{x^2}}=\Theta^{\hatt{x^1}}_{\,\, \hatt{c}} \wedge \Theta^{\hatt{c}}_{\,\, \hatt{x^2}} \\
    =& p_1p_2 t^{-2} \mbo{e}^{\hatt{x^1}} \wedge \mbo{e}^{\hatt{x^2}}
    \intertext{analogously}
     \brie^{\hatt{x^1}}_{\,\, \hatt{x^3}} =& \Theta^{\hatt{x^1}}_{\,\, \hatt{c}} \wedge \Theta^{\hatt{c}}_{\,\, \hatt{x^2}} \\
    =& p_1p_3 t^{-2} \mbo{e}^{\hatt{x^1}} \wedge \mbo{e}^{\hatt{x^3}}
    \intertext{and}
     \brie^{\hatt{x^2}}_{\,\, \hatt{x^3}} =& \Theta^{\hatt{x^2}}_{\,\, \hatt{c}} \wedge \Theta^{\hatt{c}}_{\,\, \hatt{x^3}} \\
    =& p_1p_2 t^{-2} \mbo{e}^{\hatt{x^2}} \wedge \mbo{e}^{\hatt{x^3}}.
\end{align}
\end{subequations}

\subsubsection{Equivariant Einstein-wave map system}
Let us consider the special case of the equivariant Einstein-wave map system. The metric is then of the form: 

\begin{align} \label{equiv-metric}
    g_{\mu \nu} dx^\mu \otimes dx^\nu  = -e^{-2\Omega (t, r)} dt^2 + e^{2 \gamma (t, r)} dr^2 + r^2 d\theta^2.  
\end{align}
We have the basis vectors $\{\mbo{e}_1, \mbo{e}_2, \mbo{e}_3 \}$ where, 

\begin{align}
    \mbo{e}^{\hatt{t}} =  e^{\Omega(t, r)} d\,t, \quad \mbo{e}^{\hatt{r}}= e^{\gamma(t, r)} d\,r, \quad \mbo{e}^{\hatt{\theta}}= r d\, \theta. 
\end{align}
Recall that, the connection 1-forms  are defined as 

\begin{align}
    \grad_X \mbo{e}_a = \Theta^b_{\,\, a} (X) \mbo{e}_b, \quad 
    \Theta^b_{\,\, a} (\ptl_j) = \Gamma^{b}_{ja}.
\end{align}

In order to compute the spin coefficients of the metric that occurs in the equivariant  Einstein-wave map system \eqref{equiv-metric}, we need to compute the exterior derivates of $\{ \mbo{e}^{\hatt{a}} \}$ i.e., Cartan's first structural equation

\begin{align}
    d\, e^{\hatt{a}} = - \Theta^{\hatt{a}}_{\,\, \hatt{b}} \wedge e^{\hatt{b}}.
\end{align}

Now then, we have 

\begin{align}
    d\, \mbo{e}^{\hatt{t}} =& \left( \ptl_t e^{\Omega (t,r)} dt + \ptl_r e^{\Omega(t,r)} dr \right) \wedge dt \notag\\
    =& e^{\Omega(t,r)} \ptl_r \Omega(t,r) dr \wedge dt = e^{-\gamma(t,r)} \ptl_r \Omega (t,r) \mbo{e}^{\hatt{r}} \wedge \mbo{e}^{\hatt{t}} 
    \intertext{likewise}
    d \, e^{\hatt{r}} =& (\ptl_t e^{\gamma(t,r)} dt + \ptl_r e^{\gamma(t,r)} dr ) \wedge dr \notag\\
    =& \ptl_r e^{\gamma(t,r)} dt \wedge dr = e^{-\Omega(t,r)} \ptl_r \gamma(t,r) \mbo{e}^{\hatt{t}} \wedge \mbo{e}^{\hatt{r}}
    \intertext{and}
    d\, e^{\hatt{\theta}} =& dr \wedge d \theta = r^{-1} e^{-\gamma(t,r)} \mbo{e}^{\hatt{r}} \wedge \mbo{e}^{\hatt{\theta}}
\end{align}
where we have used the bi-linear property of the wedge product. 
  Let us now read-off the spin coefficients. We have, 
  
  \begin{align}
      d\, \mbo{e}^{\hatt{t}} =& - \Theta^{\hatt{t}}_{\,\, \hatt{b}} \wedge \mbo{e}^{\hatt{b}} =-- \Theta^{\hatt{t}}_{\,\, \hatt{b}} \wedge \mbo{e}^{\hatt{b}} = - e^{-\gamma(t,r)} \ptl_r \Omega(t,r) \mbo{e}^{\hatt{r}} \wedge \mbo{e}^{\hatt{t}} \notag
      \intertext{thus}
      \Theta^{\hatt{t}}_{\,\, r} =& e^{-\gamma(t,r)} \ptl_r \Omega(t,r) \mbo{e}^{\hatt{t}} 
      \intertext{likewise}
      d\, \mbo{e}^{\hatt{r}} =& - \Theta^{\hatt{r}}_{\,\, \hatt{b}} \wedge \mbo{e}^{\hatt{b}} = - \Theta^{\hatt{r}}_{\,\, \hatt{t}} \wedge \mbo{e}^{\hatt{t}} = - e^{-\Omega(t,r)} \ptl_r \gamma(t,r) \mbo{e}^{\hatt{r}} \wedge \mbo{e}^{\hatt{t}} 
      \intertext{and}
      \Theta^{\hatt{r}}_{\,\, \hatt{t}} =& e^{-\Omega(t,r)} \ptl_t \gamma(t,r) \mbo{e}^{\hatt{r}}.
      \intertext{Finally}
      d\, \mbo{e}^{\hatt{\theta}} =& - \Theta^{\hatt{\theta}}_{\,\, \hatt{b}} \wedge \mbo{e}^{\hatt{b}} = - \Theta^{\hatt{\theta}}_{\,\, \hatt{r}} \wedge \mbo{e}^{\hatt{r}} = - e^{-\gamma(t,r)} r^{-1} \mbo{e}^{\hatt{\theta}} \wedge \mbo{e}^{\hatt{r}} 
      \intertext{and}
      \Theta^{\hatt{\theta}}_{\,\, \hatt{r}} =& e^{-\gamma(t,r)}r^{-1} \mbo{e}^{\hatt{\theta}}.
  \end{align}
    Let us now turn to the components of the curvature. 
    
    \begin{align}
        \rie^{\hatt{a}}_{\quad \hatt{b}} =& d\, \Theta^{\hatt{a}}_{\,\, \hatt{b}} + \Theta^{\hatt{a}}_{\,\, \hatt{c}} \wedge \Theta^{\hatt{c}}_{\,\, \hatt{b}} \\
        =& - \halb \rie^{\hatt{a}}_{\quad \hatt{b} \hatt{c} \hatt{d}} \mbo{e}^{c} \wedge \mbo{e}^d.
    \end{align}
    
The components are then 

\begin{align}
    \rie^{\hatt{t}}_{\quad \hatt{r}} =& d\, \Theta^{\hatt{t}}_{\,\, \hatt{r}} + \Theta^{\hatt{t}}_{\,\, \hatt{b}} \wedge \Theta^{\hatt{b}}_{\,\, \hatt{r}}  \notag\\
    =& \left( \ptl_t ( e^{\Omega(t,r) - \gamma(t,r)} \ptl_r \Omega(t,r)) d\, t + \ptl_r ( e^{\Omega(t,r) - \gamma(t,r)} \ptl_r \Omega(t,r) ) d\, r \right) \wedge d\, t \notag\\
    =& \ptl_r (e^{\Omega(t,r -\gamma(t,r)} \Omega(t,r)) dr \wedge dt  \\
    \rie^{\hatt{t}}_{\quad \hatt{r}} =& d\, \Theta^{\hatt{t}}_{\,\, \hatt{\theta}} + \Theta^{\hatt{t}}_{\,\, \hatt{c}} \wedge \Theta^{\hatt{c}}_{\,\, \hatt{\theta}} \notag\\
    =& e^{-\gamma(t,r)} \ptl_r \Omega(t,r) (e^{-\Omega(t,r)} dt) \wedge e^{-\gamma(t,r)} r^{-1} (r d \theta) \notag\\
    =& e^{\Omega(t,r) - 2 \gamma(t,r)} \ptl_r \Omega(t,r) dt \wedge d \theta 
    \intertext{and}
    \rie^{\hatt{r}}_{\quad \hatt{\theta}} =& d\, \Theta^{\hatt{r}}_{\,\, \hatt{\theta}} + \Theta^{\hatt{r}}_{\,\, \hatt{c}} \wedge \Theta^{\hatt{c}}_{\,\, \hatt{\theta}} \notag\\
    =& (- e^{- \gamma(t,r)}\ptl_t \gamma(t,r) dt + e^{-\gamma(t,r)} \ptl_r \gamma(t,r) dt ) \wedge d \theta \notag\\
    =& - e^{-\gamma(t,r)} \ptl_t \gamma(t,r) dt \wedge d \theta - e^{-\gamma(t,r)} \ptl_r \gamma(t,r) dr \wedge d \theta.
\end{align}
 The tensorial components of the Riemann tensor can be recovered as follows: 
   \begin{align}
       \rie =& \rie^{\hatt{a}}_{\quad \hatt{b} \hatt{c} \hatt{d}} \mbo{e}_{\hatt{a}} \mbo{e}^{\hatt{b}} \mbo{e}^{\hatt{c}} \mbo{e}^{\hatt{d}}
       \intertext{the coordinate basis can be expressed simply as}
       \mbo{e}_{\hatt{a}} =& \mbo{e}^{\mu}_{\hatt{a}} \ptl_{x^\mu} \quad \text{and} \quad  \mbo{e}^{\hatt{a}} = \mbo{e}^{\hatt{a}}_{\mu} d x^\mu.
   \end{align}
\subsubsection{Schwarzschild Spacetime}
   
   Consider a $3+1$ dimensional spherically symmetric spacetime in polar coordinates $(t,r, \theta, \phi)$:
   
   \begin{align}
       \bar{g}_{\mu \nu} dx^\mu \otimes dx^\nu = -e^{2 \a (t,r)} dt^2 +  e^{2 \b(t,r)}dr^2 + r^2 (d \theta^2 + \sin^2 \theta d \phi^2), \mu, \nu=0,1,2,3. 
   \end{align}
Now consider the orthonormal basis, 
   
   \begin{align}
       \mbo{e}^{\hatt{t}} = e^{\a(t,r)} dt, \quad \mbo{e}^{\hatt{r}}= e^{\b(t,r)} dr,\quad  \mbo{e}^{\hatt{\theta}} = r d \theta, \quad \mbo{e}^{ \hatt{\phi}}= r \sin \theta d \phi. 
   \end{align}
 Now then, let us compute,
   
   \begin{align}
       d \mbo{e}^{\hatt{t}} = e^{\a(t,r)} \ptl_r\a(t,r) dr \wedge dt =  e^{-\b(t,r)} \ptl_r \a \mbo{e}^{\hatt{r}} \wedge \mbo{e}^{\hatt{t}} = - \Theta^{\hatt{t}}_{\,\, \hatt{a}} \wedge \mbo{e}^{\hatt{a}},
   \end{align}
   thus,
   \begin{align}
       \Theta^{\hatt{t}}_{\,\, \hatt{r}} = e^{-\b(t,r)} \ptl_r \a(t,r) \mbo{e}^{\hatt{t}}
       \end{align}
   is the only non-zero component. We have 
   
   \begin{align}
       d \mbo{e}^{\hatt{r}} =& e^{\b(t,r)} \ptl_t \b(t,r) dt \wedge dr = e^{-\a(t,r)} \ptl_r \b(t,r) \mbo{e}^{\hatt{t}} \wedge \mbo{e}^{\hatt{r}}=-\Theta^{\hatt{r}}_{\,\, \hatt{b}} \wedge \mbo{e}^{\hatt{b}}
       \intertext{and}
       \Theta^{\hatt{r}}_{\,\, \hatt{t}} =& e^{-\a(t,r)} \ptl_t \b(t,r) \mbo{e}^{\hatt{r}}.
   \end{align}
   
   Analogously, we have 
   
   \begin{align}
       d \mbo{e}^{\hatt{\theta}} = dr \wedge d \theta    \\
       \Theta^{\hatt{\theta}}_{\,\,\hatt{r}} = r^{-1} e^{\b(t,r)} \mbo{e}^{\hatt{\theta}}
   \end{align}
   and 
   \begin{align}
       d \mbo{e}^{\hatt{\phi}} =& \sin \theta dr \wedge d \phi + r \cos \theta d \theta \wedge d \phi   \notag\\
       =&- r^{-1} e^{-\b} \mbo{e}^{\hatt{\phi}} \wedge \mbo{e}^{\hatt{r}} -  r^{-1} \sin^{-1} \theta \cos \theta 
       \mbo{e}^{\hatt{\phi}} \wedge \mbo{e}^{\hatt{\theta}} \notag\\
       =& - \Theta^{\hatt{\theta}}_{\,\,\hatt{a}} \wedge \mbo{e}^{\hatt{a}}
   \end{align}
   The non-vanishing spin-connection components are 
   
   \begin{align}
       \Theta^{\hatt{\phi}}_{\,\, \hatt{r}} = r^{-1} e^{-\b(t,r)} \mbo{e}^{\hatt{\phi}}, \quad \Theta^{\hatt{\phi}}_{\,\, \hatt{\theta}} = r^{-1} \sin^{-1} \cos \theta \mbo{e}^{\hatt{\phi}} 
       \end{align}
   Let us now move on curvature 2-forms. Recall 
   $$\displaystyle \brie^{\hatt{a}}_{\,\, \hatt{b}}= d \Theta^{\hatt{a}}_{\,\, \hatt{b}} + \Theta^{\hatt{a}}_{\,\, \hatt{c}} \wedge \Theta^{\hatt{c}}_{\,\, \hatt{b}}$$
   
   \begin{subequations}
   \begin{align}
       \brie^{\hatt{t}}_{\,\, \hatt{r}} =& d \Theta^{\hatt{t}}_{\,\, \hatt{r}} +  \Theta^{\hatt{t}}_{\,\, \hatt{c}} \wedge \Theta^{\hatt{c}}_{\,\, \hatt{r}} = e^{\b(t,r)} \ptl_r ( e^{\b(t,r)} \ptl_r \a(t,r)) \mbo{e}^{\hatt{r}} \wedge \mbo{e}^{\hatt{t}} \\
       \brie^{\hatt{t}}_{\,\, \hatt{\theta}} =& \Theta^{\hatt{t}}_{\,\, \hatt{r}} \wedge \Theta^{\hatt{r}}_{\,\, \hatt{\theta}} = -e^{-2 \b{t,r}} r^{-1}\ptl_r \a(t,r) \mbo{e}^{\hatt{t}} \wedge \mbo{e}^{\hatt{\theta}}  \\
       \intertext{likewise}
       \brie^{\hatt{t}}_{\,\, \hatt{\phi}} =&  \Theta^{\hatt{t}}_{\,\, \hatt{c}} \wedge \Theta^{\hatt{c}}_{\,\, \hatt{\phi}}= - \mbo{e}^{-2\b(t,r)} r^{-1} \ptl_r \a(t,r) \mbo{e}^{\hatt{t}} \wedge \mbo{e}^{\hatt{\phi}} \\
       \brie^{\hatt{\theta}}_{\,\,\hatt{r}} =& d \Theta^{\hatt{\theta}}_{\,\, \hatt{r}} + \Theta^{\hatt{\theta}}_{\,\, \hatt{c}} \wedge \Theta^{\hatt{c}}_{\,\,\hatt{r}}
       = - e^{-\b} \ptl_r \b(t,r) dt \wedge d \theta - \ptl_r \b(t,r) e^{-\b(t,r)} dr \wedge d \theta 
       \\
       \brie^{\hatt{\phi}}_{\,\, \hatt{\theta}} =&  d \Theta^{\hatt{\phi}}_{\,\, \hatt{\theta}} + \Theta^{\hatt{\phi}}_{\,\, \hatt{r}} \wedge \Theta^{\hatt{r}}_{\,\, \hatt{\theta}}   \notag\\
       =& \sin \theta d \phi \wedge d \phi + e^{-2 \b(t,r)} \sin \theta d \phi \wedge d \theta. 
   \end{align}
   \end{subequations}

   \subsection{Curvature in ADM formalism}

The components of the Weyl curvature tensor of $U(1)$ symmetric spacetimes in the Kaluza-Klein type coordinate system, in the ADM formalism and in terms of the dimensionally reduced canonical phase space variables,  can be expressed as follows. Firstly, recall that: 

\begin{align}
    \mathcal{E}^{ij} = \Big ( \bar{\mu}_q \leftexp{(\bar{q})}{\text{Ric}}^{ij} - \bar{\mu}^{-1}_{\bar{q}}  \Big( \bar{\pi}^i_{\,\,\,\ell} \bar{\pi}^\ell_{\,\,\,j} - \halb \bar{\pi}^{ij} \bar{\pi}^{\ell}_{\,\,\,\, \ell}   \Big) \Big)
\end{align}
\begin{align}\label{eq:d10}
\mathcal{E}^{ab} &= e^{3\gamma} \bar{\mu}_{q} \left (\leftexp{(q)}{R}^{ab} - {}^{(q)} \nabla^b {}^{(q)} \nabla^a \gamma + q^{ab} \leftexp{(q)}{\grad}_c  \leftexp{(q)}{\grad}^c \gamma - 3 \leftexp{(q)}{\grad}^a \gamma  \leftexp{(q)}{\grad}^b \gamma \right. \notag \\
& + q^{ab} \leftexp{(q)}{\grad}_c \gamma  \leftexp{(q)}{\grad}^c \gamma  
 \left.{}- \frac{1}{2} e^{4\gamma} q^{ac} q^{df} q^{be} \left(\ptl_e \mathcal{A}_{d} - \ptl_d \mathcal{A}_{e}\right) \left(\ptl_c \mathcal{A}_{f} - \ptl_f \mathcal{A}_{c}\right)\right ) \notag\\
& {}- e^\gamma \bar{\mu}^{-1}_q \left(-\frac{1}{2} e^{2\gamma} \pi^{ab} \left(\frac{1}{2} p + 2q_{cd} \pi^{cd}\right) 
  {}+ e^{2\gamma} q_{cd} \pi^{ad} \pi^{bc} + \frac{1}{4} e^{-2\gamma} \mathcal{E}^a \mathcal{E}^b\right), \\
  \mathcal{E}_3^{\hphantom{3}a} &= e^{5\gamma} \bar{\mu}_q \left (\frac{1}{2} {}^{(q)} \nabla_b \left (q^{bd} q^{ac} \left(\ptl_c \mathcal{A}_{d} - \ptl_c \mathcal{A}_{c}\right)\right ) - \frac{5}{2} {}^{(q)} \nabla^b \gamma  q^{ac} \left(\ptl_c \mathcal{A}_{b} - \ptl_b \mathcal{A}_{c}\right)\right ) \notag \\
& \quad - e^\gamma \bar{\mu}^{-1}_q \left (\frac{1}{2} \mathcal{E}^c\; q_{bc} \pi^{ab} + \frac{1}{8} p_\gamma \; \mathcal{E}^a\right ) \\
\mathcal{E}_{33} &= -e^{\gamma} \bar{\mu}^{-1}_q \left (\frac{1}{4} e^{-2\gamma} q_{ab} \mathcal{E}^a \mathcal{E}^b + \frac{e^{2\gamma}}{4} p_{\gamma} \left(\frac{1}{2} p_\gamma + q_{ab} \pi^{ab}\right)\right ) \notag\\
& {}- e^{3\gamma} \left ( \vphantom{\frac{1}{4}}\partial_a \left(\bar{\mu}_q q^{ab} \ptl_b \gamma \right) + \bar{\mu}_q q^{ab} \ptl_a \gamma \ptl_b \gamma \right. \notag\\
& \left. \quad + \frac{1}{4} e^{4\gamma} \bar{\mu}_q q^{ac} q^{bd} \left( \ptl_b \mathcal{A}_{c} - \ptl_c \mathcal{A}_{b}\right) \left(\ptl_a \mathcal{A}_{d} - \ptl_d \mathcal{A}_{a}\right)\right ). 
\end{align}
Now the recall the `magnetic' components of the Weyl tensor $B^{ij}$ 

\begin{align}
    B^{ij} = \eps^{m \ell j} \bar{\mu}^{-1}_q \Big(  \leftexp{(q)}{\grad}_\ell  \bar{\pi}^{m}_{\,\,\,\, i} -\halb \delta^i_{m} \leftexp{(q)}{\grad}_\ell \text{tr}(\bar{\pi})   \Big)
\end{align}
specifically, 
\begin{align}\label{eq:d13}
 \mathcal{B}^{ab} &= \epsilon^{bc3} \Bigg (\frac{1}{2} e^{5\gamma} q_{ce} \bar{\mu}^{-1}_q \pi^{de} q^{af} \epsilon_{fd} \left(\epsilon^{mn} \ptl_n \mathcal{A}_{m}\right)
- \mathcal{E}^d \bar{\mu}^{-1}_q e^\gamma \ptl_f \gamma q^{af}    q_{dc} + \halb e^{\gamma} \bar{\mu}^{-1}_q \mathcal{E}^d \ptl_d \gamma \delta_c^a  \notag\\
 & {}- \frac{1}{2}  e^{5 \gamma} \bar{\mu}^{-1}_q \left(\frac{1}{2} p_\gamma  + q_{mn} \pi^{mn}\right) q^{af} \epsilon_{fc} \left(\epsilon^{rs} \ptl_s \mathcal{A}_{r}\right) - \frac{1}{2} e^\gamma\; {}^{(q)} \nabla_c \left( \mathcal{E}^a \bar{\mu}^{-1}_q\right) \Bigg)  , \\
\mathcal{B}_3^{\hphantom{3}a} &= \epsilon^{3ca} \Bigg( - e^{3\gamma} \bar{\mu}^{-1}_q \ptl_b \gamma \left(\pi_c^b - \delta_c^b q_{mn} \pi^{mn}\right) - \frac{1}{4} e^{3\gamma} \bar{\mu}^{-1}_q \mathcal{E}^b \left( \ptl_c \mathcal{A}_{b} - \ptl_b \mathcal{A}_{c}\right) \notag\\
& \quad + \frac{1}{4} \ptl_c( e^{3 \gamma} p_\gamma \bar{\mu}^{-1}_q)\Bigg)  . \\
\mathcal{B}_{33} &= \epsilon^{ac3} \Bigg(  e^{2\gamma} \ptl_c(\frac{1}{2} e^{-\gamma} \bar{\mu}^{-1}_q \mathcal{E}_a) - \frac{1}{2} e^{\gamma} \mathcal{E}_a \ptl_c \gamma \notag\\
&+ \halb e^{5 \gamma} \bar{\mu}^{-1}_q \left(\frac{1}{2} p_\gamma + q_{bd} \pi^{bd}\right) \left(\ptl_c \mathcal{A}_{a} - \ptl_a \mathcal{A}_{c}\right) 
  -\halb e^{5 \gamma} \bar{\mu}^{-1}_q \pi_a^f \left(\ptl_c \mathcal{A}_{f} - \ptl_f \mathcal{A}_{c}\right)\Bigg). 
\end{align}
In these formulas indices \(a, b, c, \ldots\) are raised and lowered with the Riemannian 2-metric \({}^{(2)}\tilde{g} = \tilde{g}_{ab} dx^a \otimes dx^b\), \({}^{(2)}\tilde{\nabla}_a\) designates covariant differentiations with respect to this metric whereas \(\mu_{{}^{(2)}\tilde{g}}\) and \({}^{(2)}\tilde{R}^{ab}\) are its `volume' element and Ricci tensor.

We can use these formulas, in connection with \eqref{constriant-propagation}, to compute the explicit evolution formulas of $\mathcal{E}$ and $\mathcal{B}$ i.e., $\displaystyle \frac{\ptl}{ \ptl t} \mathcal{E}$ and $\displaystyle \frac{\ptl}{ \ptl t} \mathcal{B}.$ These constructions are closely related to the constraint evolution equations that we discussed earlier. It may be noted that the Bel-Robinson energy can be constructed from the quantities $\mathcal{E}^{ij} \mathcal{B}_{ij}$ and $\mathcal{B}^{ij} \mathcal{B}_{ij}.$ Likewise, we can use these formulas to construct the explicit expressions for the ADM components of the Weyl tensor for $U(1)$ symmetric spacetimes such as the Kasner solution, equivariant $U(1)$ spacetimes and also axisymmetric spacetimes such as the Kerr black hole spacetimes.   

\bibliographystyle{plain}

\begin{thebibliography}{10}

\bibitem{M_anderson_01}
M.~Anderson.
\newblock On long-time evolution in general relativity and geometrization of
  3-manifolds.
\newblock {\em Ann. Henri Poincar\'e}, 222(3):533--567, 2001.

\bibitem{AGS_15}
L.~Andersson, N.~Gudapati, and J.~Szeftel.
\newblock Global regularity for 2+1 dimensional {E}instein-wave map system.
\newblock {\em Ann. PDE}, 3(13), 2017.

\bibitem{Andersson_Moncrief_2004}
Lars Andersson and Vincent Moncrief.
\newblock Future complete vacuum spacetimes.
\newblock In Piotr~T. Chru{\'{s}}ciel and Helmut Friedrich, editors, {\em The
  Einstein Equations and the Large Scale Behavior of Gravitational Fields},
  pages 299--330, Basel, 2004. Birkh{\"a}user Basel.

\bibitem{Ycb-Mon_01}
Y.~Choquet-Bruhat and V.~Moncrief.
\newblock Future global in time {E}insteinian spacetimes with {U}(1) isometry
  group.
\newblock {\em Ann. Henri. Poincar\`e}, 2(6):1007--1064, 2001.

\bibitem{DC_09_book}
D.~Christodoulou.
\newblock The formation of black holes in general relativity.
\newblock {\em EMS Press}, 2009.

\bibitem{chris_tah1}
D.~Christodoulou and A.S. Tahvildar-Zadeh.
\newblock On the regularity of spherically symmetric wave maps.
\newblock {\em Comm. Pure Appl. Math.}, 46(7):1041--1091, 1993.

\bibitem{BN_17}
B.~Dodson and N.~Gudapati.
\newblock On scattering for small data of 2+1 dimensional equivariant
  {E}instein-wave map system.
\newblock {\em Ann. Henri. Poincar\'e}, 18(9):3097--3142, 2017.

\bibitem{Eardley_Moncrief_I}
D.~Eardley and V.~Moncrief.
\newblock {The global existence of Yang-Mills-Higgs fields in $4$-dimensional
  Minkowski space. I. Local existence and smoothness properties}.
\newblock {\em Communications in Mathematical Physics}, 83(2):171 -- 191, 1982.

\bibitem{Eardley_Moncrief_II}
D.~Eardley and V.~Moncrief.
\newblock {The global existence of Yang-Mills-Higgs fields in $4$-dimensional
  Minkowski space. II. Completion of proof}.
\newblock {\em Communications in Mathematical Physics}, 83(2):193 -- 212, 1982.

\bibitem{ycb_classic}
Y.~Four\'es-Bruhat.
\newblock Th\'eor\'eme d'existence pour des syst\'emes d'\'equations aux
  d\'eriv\'ees partielles \'q quatre variables.
\newblock {\em C. R. Acad. Sci. Paris}, 234:500--502, 1952.

\bibitem{Friedlander_1975}
F.~G. Friedlander.
\newblock The wave equation on a curved space-time.
\newblock {\em Cambridge Monographs on Mathematical Physics}, 1975.

\bibitem{diss}
N.~Gudapati.
\newblock The {C}auchy problem for energy critical self-gravitating wave maps.
\newblock {\em Dissertation (FU Berlin)}, 2013.

\bibitem{diss_13}
N.~Gudapati.
\newblock The {C}auchy problem for energy critical self-gravitating wave maps.
\newblock {\em Dissertation (FU Berlin)}, 2013.

\bibitem{NG_21_1}
N.~Gudapati.
\newblock Axially symmetric perturbations of {K}err black holes {II}: Boundary
  behaviour of the dynamics in the orbit space.
\newblock {\em arXiv:2109.02131}, 2021.

\bibitem{NG_22_1}
N.~Gudapati.
\newblock Scattering for the equivariant {U}(1) problem.
\newblock {\em arXiv}, 2022.

\bibitem{KRod_10}
S.~Klainerman and I.~Rodnianski.
\newblock On the breakdown criterion in general relativity.
\newblock {\em J. Amer. Math. Soc.}, 23(02):345--382, 2010.

\bibitem{KL_RO_SZ_2012}
S.~Klainerman, I.~Rodnianski, and J.~Szeftel.
\newblock The bounded {$L^2$} curvature conjecture.
\newblock {\em Invent. Math.}, 202(1):91--216, 2015.

\bibitem{Rod_Klain_2012}
Sergiu Klainerman and Igor Rodnianski.
\newblock {On the formation of trapped surfaces}.
\newblock {\em Acta Mathematica}, 208(2):211 -- 333, 2012.

\bibitem{krieg_schlag_ccwm}
J.~Krieger and W.~Schlag.
\newblock Concentration compactness for critical wave maps.
\newblock {\em AMS {M}onographs in {M}athematics}, 2012.

\bibitem{Moncrief_2005}
V.~Moncrief.
\newblock An integral equation for spacetime curvature in general relativity.
\newblock {\em Surveys in Differential Geometry}, 10:109--146, 2005.

\bibitem{Moncrief_2014}
V.~Moncrief.
\newblock Reflections on the {U}(1) problem in general relativity.
\newblock {\em J. Fixed Point Theory Appl.}, 14:397--418, 2013.

\bibitem{jal_tah}
J.~Shatah and A.S. Tahvildar-Zadeh.
\newblock Cauchy problem for equivariant wave maps.
\newblock {\em Comm. Pure Appl. Math.}, 47(5):719--754, 1994.

\bibitem{Klainerman_Rodnianski_07}
S.Klainerman and I.~Rodnianski.
\newblock A {K}irchoff–{S}obolev parametrix for the wave equation and
  applications.
\newblock {\em J. Hyperbolic Differ. Equ.}, 04(03):401--433, 2007.

\bibitem{sterb_tata_long}
J.~Sterbenz and D.~Tataru.
\newblock Energy dispersed large data wave maps in 2+1 dimensions.
\newblock {\em Commun. Math. Phys.}, 298:139--230, 2010.

\bibitem{tao_all}
T.~Tao.
\newblock Global regularity of wave maps i to viii.
\newblock {\em arXiv}, 2001-09.

\bibitem{QWang_10}
Qian Wang.
\newblock Improved breakdown criterion for {E}instein vacuum equations in
  {C}{M}{C} gauge.
\newblock {\em Communications on Pure and Applied Mathematics}, 65(1):21--76,
  2012.

\end{thebibliography}

	\end{document}